\newcommand{\R}{\mathbb{R}}
\begin{document}
 \title{Dynamic behavior for a gradient algorithm with energy and momentum%\thanks{Received on September 7, 2023 and accepted on March 12, 2024 (The correct dates will be entered by the editor).}
 }

          %For each author, make a block with the following macros:

          \author{Hailiang Liu\thanks{Department of Mathematics, Iowa State University, Ames, IA, USA, (hliu@iastate.edu).}
          \and Xuping Tian\thanks{Department of Mathematics, Iowa State University, Ames, IA, USA, (xupingt@iastate.edu).}}

         \pagestyle{myheadings} \markboth{Dynamic behavior for the AGEM algorithm}{Hailiang Liu and Xuping Tian} \maketitle

          \begin{abstract}
               This paper investigates a novel gradient algorithm, AGEM, using both energy and momentum, for addressing general non-convex optimization problems. The solution properties of the AGEM algorithm, including aspects such as uniformly boundedness and convergence to critical points, are examined. The dynamic behavior is studied through a comprehensive analysis of a high-resolution ODE system. This ODE system, being nonlinear, is derived by taking the limit of the discrete scheme while preserving the momentum effect through a rescaling of the momentum parameter. The paper emphasizes the global well-posedness of the ODE system and the time-asymptotic convergence of solution trajectories. Furthermore, we establish a linear convergence rate for objective functions that adhere to the Polyak-{\L}ojasiewicz condition.
          \end{abstract}
\begin{keywords}  gradient algorithm; energy adaptive; dynamical systems; PL condition
\end{keywords}

 \begin{AMS} 34D05; 65K10
\end{AMS}

\section{Introduction}\label{intro}
In this paper, we will be considering the nonconvex optimization problem,
\begin{equation}\label{minopt}
\min_{\theta \in \R^n} f(\theta),    
\end{equation}
where the differentiable objective function $f:\R^n\to\R$ is assumed to be bounded from below, i.e., $f+c>0$ for some $c\in\R$.  Applications like training machine learning models often involve large-scale problems with non-convex objective functions represented as a finite sum over terms associated with individual data.    
%For (\ref{minopt}) in large-scale problems such as training machine learning models, 
In such settings, first-order gradient methods such as gradient descent (GD) and its variants are commonly employed,   due to their computational efficiency and satisfactory performance \cite{BCN18}.  %to speed up the convergence. 
%{The essential idea in many adaptive gradient methods is using the past gradient values to adjust the effective step size, with renowned works including Adam \cite{KB15} and RMSProp \cite{TH12}. 

Despite their wide usage, GDs face a potential limitation on step size, attributed to the conditional stability of GD. This limitation can significantly impede convergence, especially in large-scale machine learning applications. This same challenge also persists for the stochastic approximation of GD, known as Stochastic Gradient Descent (SGD). Consequently, there has been much effort directed towards enhancing the convergence of first-order gradient methods. Among these efforts, adaptive step size \cite{TH12, KB15} and momentum \cite{P64}  emerge as two widely-used techniques, aiming to address the aforementioned limitations and improve the efficiency of optimization algorithms in practice. 

To overcome the issue of step size limitations, the authors in~\cite{LT20,LT21} introduced an adaptive gradient descent with energy (AEGD). The algorithm initiates from $\theta_0$ and $r_0=F(\theta_0)$ with $F(\theta)=\sqrt{f(\theta)+c}$, inductively define
\begin{subequations}\label{aegd}
\begin{align}
v_k &= \nabla F(\theta_k),\\
r_{k+1} &= \frac{r_k}{1+2\eta |v_k|^2},\\
\theta_{k+1} &= \theta_k - 2\eta r_{k+1}v_k,
\end{align}
\end{subequations}
where $c \in \mathbb{R}$ is chosen so that $\inf\limits_{\theta \in \Theta}\left(f(\theta)+c\right)>0$. In sharp contrast to GD, energy-adaptive gradient methods exhibit unconditional energy stability, regardless of the base step size $\eta>0$. Additionally, findings in~\cite{LT20} indicate that the parameter $c$ has minimal impact on the performance of~\eqref{aegd}.

% AEGD (adaptive gradient descent with energy) is a gradient method that was first introduced by us in \cite{LT20} to address the step size limitation with the vanilla gradient descent. The distinct feature of AEGD is the use of an additional energy variable so that the resulting algorithm is unconditionally energy stable regardless of the base learning rate. 

AEGD can be extended 
%to account for stochastic effects in estimating the gradient \cite{LT20} or 
to include momentum for accelerated convergence ~\cite{LT21}. In this study, we explore a variant of AEGD with momentum, referred to as AGEM. It is defined 
inductively as follows: 
\begin{subequations}\label{vv}
\begin{align}
%v_k &= \nabla F(\theta_k),\\
v_k &= \beta v_{k-1} + (1-\beta)\nabla F(\theta_k),\quad v_{-1}=\bf{0},\\
r_{k+1} &= \frac{r_k}{1+2\eta|v_k|^2},\\
\theta_{k+1} &= \theta_k - 2\eta r_{k+1}v_k, 
\end{align}
\end{subequations}
where the momentum parameter $\beta\in[0,1)$ is a crucial element, and setting $\beta=0$ reduces it to the original AEGD in (\ref{aegd}). 
Unlike GD, all these energy-adaptive gradient methods exhibit unconditional energy stability, irrespective of the base step size $\eta$.  The excellent performance of AEGD-type schemes has been demonstrated across various optimization tasks \cite{LT20, LT21}. On the theoretical front, the convergence rates obtained in \cite{LT20}   depend on $kr_k$, rather than solely on $k$. This dependence on the norm of energy could pose challenges when the energy variable decays too rapidly, potentially impacting the convergence behavior. Addressing this issue represents a main challenge in achieving a better understanding of the energy-adaptive gradient methods.  

Conversely, the link between ODEs and numerical optimization can be established by considering infinitesimally small step sizes.  This approach ensures that the trajectory or solution path converges to a curve modeled by an ODE. Leveraging the well-established theory and dynamic properties of ODEs can yield profound insights into optimization processes, as exemplified in \cite{A98, AG11, AG22, SB15}.  

The motivation for this study stems from our two recent papers \cite{LT21, LT22}, wherein we successfully demonstrated the convergence of AEGD with momentum through comprehensive experiments. These experiments showcased its superior performance across stochastic and non-stochastic scenarios.  Our aim in the current work is to deepen our understanding of AGEM, as expressed in equation (\ref{vv}), specifically focusing on elements such as the momentum effect, dynamics of the energy variable, and convergence rates. By delving into its continuous formulation, we seek to gain additional insights that will guide the implementation of the algorithm and further enhance its effectiveness. Specifically,  we derive an ODE system for (\ref{vv}), representing the precise limit of (\ref{vv}) when employing infinitesimally small step sizes while maintaining $\epsilon=\beta \eta/(1-\beta)$ as a constant.  This ODE system, featuring  an unknown vector $U:=(v, r, \theta)$, is expressed as
\begin{subequations}\label{sgdemode-}
\begin{align}
\epsilon \dot v &= - v + \nabla F(\theta),\\
\dot r &= -2r|v|^2,\\
\dot \theta &= -2rv,
\end{align}
\end{subequations}
for $t>0$, with initial conditions $U(0)=(0, F(\theta_0),  \theta_0)$; here, $\theta_0$ is the starting point in the AGEM scheme,  and $\dot U$  denotes the time derivative. Notably, this study represents the pioneering effort to model energy-adaptive schemes using ODE systems. This linkage enables the examination of the convergence properties of (\ref{vv}) from the standpoint of continuous dynamics, particularly for general smooth objective functions. 

For a gradient method, the geometric property of the objective function $f$ often play a crucial role in determining the convergence and convergence rates. In the case of non-convex $f$, we consider an established  condition originally introduced by Polyak \cite{P63}, who showed its  sufficiency for GD to achieve a linear convergence rate. A recent convergence study under this condition is presented in \cite{KN16}, referring to this condition as the Polyak-\L ojasiewicz (PL) inequality. This designation originates from its connection  the gradient inequality introduced by \L ojasiewicz in 1963 \cite{Lo63}. 
Observations regarding optimization-related dynamical systems have suggested that, in general,  convergence along subsequences is typically achievable. However, achieving convergence of the entire  trajectory require more geometric properties. For instance, the \L ojasiewicz inequality has been used to establish trajectory convergence in \cite{AABR02} for a second-order gradient-like dissipative dynamical system with Hessian-driven damping.  Section 7 of this work shows how such an inequality can ensure trajectory convergence for $\theta(t)$ in (\ref{sgdemode-}). We should point out that several significant non-convex problems in machine learning, including certain classes of neural networks, have recently been shown to satisfy the PL condition \cite{BB18, CP18, SJ19, LZ22}. The prevalent belief is that the PL/KL condition provides a pertinent  and attractive framework for numerous  machine learning problems, especially  in the over-parametrized regime.  
%Some classes of neural networks, have been shown to satisfy the PL condition  

{\bf Contributions.} %This work proposes a new variant of GD, which incorporates momentum and the energy idea from AEGD at the same time, and 
Our findings provide a rigorous and precise understanding of the convergence behavior for both the discrete scheme (\ref{vv}) and its continuous counterpart (\ref{sgdemode-}). Our main contributions can be   summarized as follows.
\vskip1mm
\begin{enumerate}[(1)]%[label={\arabic*.}]
\item %For the proposed method, 
For (\ref{vv}), we establish that the iterates are uniformly bounded and converge to the set of critical points of the objective function when the step size is appropriately small. Additionally, we demonstrate that  $\lim_{k\to \infty}  r_k=r^*>0$.
\item We derive (\ref{sgdemode-}) as a high resolution ODE system corresponding to the discrete scheme (\ref{vv}). % a continuous dynamical system of the proposed method by letting the step size goes to zero while keeping the momentum parameter fixed. 
For this ODE system, we initially show global well-posedness by constructing a suitable Lyapunov function. Subsequently, we establish the time-asymptotic convergence of the solution toward critical points using the Lasalle invariance principle. Furthermore, we provide a positive lower bound of the energy variable. 
\item For objective functions satisfying the PL condition (refer to  Section 2.2), we establish a linear convergence rate: 
$$
f(\theta(t))-f(\theta^*)\leq (f(\theta_0)-f(\theta^*))e^{-\alpha t}  
$$
for some $\alpha>0$, where $\theta^*$ represents the global minimizer of $f$ (not necessarily convex). %PL condition is a relaxation of strong convexity. Indeed,
\item We propose several variations and extensions. For a broader class of objective functions satisfying the \L ojasiewicz inequality,  $\theta(t)$ is shown to have finite length, hence converging to a single minimum of $f$, accompanied by associated convergence rates. 
\end{enumerate}
\vskip1mm

Obtaining convergence rates for non-convex optimization problems poses a significant challenge.  The approach employed in Section \ref{rate} to deliver the linear convergence rate for the system (\ref{sgdemode-}) draws inspiration from \cite{AG22}, where a  linear convergence rate was established  for the Heavy Ball dynamical system, namely (\ref{ddtheta}), with $a(t)$ being a constant. In the case of the more intricate nonlinear system (\ref{sgdemode-}), we manage to formulate a class of control functions that serve a similar role to those in \cite{AG22}. The derivation of the convergence rate results for (\ref{sgdemode-}) differs significantly and is more intricate. Notably, we employ a subtle optimization argument to identify an optimal control function that facilitates the desired decay rate.

\subsection{Related works.} 
{\bf IEQ strategy.} The fundamental concept underpinning AEGD is the  invariant energy quadratization (IEQ) strategy, originally introduced to devise linear and unconditionally energy stable schemes for gradient flows expressed as partial differential equations \cite{IEQ1, IEQ2}. The AEGD algorithm \cite{LT20} stands out as the pioneer in utilizing the energy update to stabilize GD in the context of optimization.

{\bf Optimization with momentum.} 
The two prominent momentum-based optimization methods are Polyaks’s Heavy Ball (HB) \cite{P64} and Nesterov’s Accelerated Gradient (NAG) \cite{N83}. The HB method combines the current gradient with a history of the previous steps to accelerate the algorithm's convergence: 
\begin{align*}
v_k &= \mu v_{k-1} + \nabla f(\theta_k), \\
\theta_{k+1} &= \theta_k - \eta v_k    
\end{align*}
where $\mu\in[0,1]$ is the momentum coefficient, and $\eta$ is the step size. Research in \cite{P64} showed that HB can significantly accelerate convergence to a local minimum. For strongly convex functions, it requires $\sqrt{\kappa}$ times fewer iterations than GD to achieve the same level of accuracy, where $\kappa$ is the condition number of the curvature at the minimum and $\mu$ is set to $\frac{\sqrt{\kappa}-1}{\sqrt{\kappa}+_1}$. Similar to the HB method, NAG also ensures a faster convergence rate than GD in specific scenarios. Particularly, for general smooth (non-strongly) convex functions, NAG achieves a global convergence rate of $O(1/k^2)$ (versus  $O(1/k)$ of GD) \cite{N83}. Recent studies have indicated that integrating momentum with other techniques can further enhance the performance of certain optimization methods. For instance,   momentum incorporation into adaptive step sizes \cite{KB15} and variance-reduction-based methods \cite{Z17} for stochastic optimization.

The approach discussed in this work integrates the momentum technique into the energy-adaptive gradient methods. 
As demonstrated  in \cite{LT20, LT21}, 
the resulting algorithms not only exhibit unconditional energy stability but also showcase faster convergence, inheriting advantages from both techniques. 
%The energy-adaptive gradient method was first introduced by us in AEGD (adaptive gradient descent with energy) \cite{LT20}, which is a gradient method that addresses the step size limitation of GD. The distinct feature of AEGD is the use of an additional energy variable so that it is unconditionally energy stable regardless of the step size.}

{\bf ODE perspectives.}  In recent years, the optimization community has shown a growing interest in examining  the continuous-time ODE limit of optimization methods as the step size tends to zero. This perspective has spurred numerous recent studies  that critically examine established  optimization methods in their continuous-time limit. A main aspect of the research in this domain focuses on developing Lyapunov functions or estimating sequences that emerge  independently from the dynamical system.

ODEs have proven to be  particularly useful in the theoretical analysis of accelerated gradient algorithms.  
%A recent line of research on the theoretical analysis of accelerated gradient algorithms has been on the study of their continuous-time limits (ODEs). With well-established theory and techniques such as Lyapunov functions, 
For instance, researchers have investigated the convergence behavior of the HB method using second-order ODEs: 
\begin{equation}\label{ddtheta}
\ddot\theta(t)+a(t)\dot\theta+\nabla f(\theta)=0,\quad\theta(0)=\theta_0,\quad\dot\theta=0
\end{equation}
with $a(t)$ being a smooth, positive and non-increasing function. The Lyapunov function $E=\frac{1}{2}|\dot\theta|^2+f(\theta)$ plays a pivotal role in the  analysis of (\ref{ddtheta}) within both convex and non-convex settings \cite{A98, AG11}. The NAG method has been linked to (\ref{ddtheta}) with $a(t)=\frac{3}{t}$ in \cite{SB15}, wherein  the optimal convergence rate of $O(1/t^2)$ of the discrete scheme in the convex setting is recovered \cite{N04}. In \cite{SD21},  the distinction between the acceleration phenomena of the HB method and the NAG method is investigated through an analysis  of high-resolution ODEs. Further analysis of (\ref{ddtheta}) is conducted in Attouch et al. \cite{Att18},  Wibisono et al. \cite{Wib16} frame it within a variational context. This  motivated further research on structure-preserving integration schemes for discretizing continuous-time optimization algorithms, as explored by Betancourt et al. \cite{Bet18}. 
Franca et al. \cite{Fran19} and Muehlebach and Jordan \cite{Mue19} highlight important geometric properties of the underlying dynamics, analyzing corresponding structure-preserving discretization schemes.  Maddison et al. \cite{Mad18} propose that convergence can be enhanced through a suitable selection of kinetic energy.  

In \cite{SG20}, a non-autonomous system of differential equations was derived  and analyzed as the continuous-time limit of adaptive optimization methods, including Adam and RMSprop. Beyond examining  convergence behavior, this dynamical approach provides insight into qualitative features of discrete algorithms. For example, by analyzing the sign GD of the form $\dot \theta=-\frac{\nabla f(\theta)}{|\nabla f(\theta)|}$ and its variants, \cite{MW20} identified three behavior patterns of Adam and RMSprop: fast initial progress, small oscillations, and large spikes. 
%For adaptive gradient methods, we refer the reader to \cite{SG20}, where they provide a general framework to study the convergence behavior of some commonly used adaptive algorithms, including Adam and RMSprop. 
Our work aligns with this approach, although,  to our  knowledge, (\ref{sgdemode-}) differs from existing ODEs for optimization methods and presents subtle analytical challenges.  

The ODE perspective has inspired researchers to propose innovative optimization methods. Indeed, starting from a continuous dynamical system with favorable converging properties, one can discretize it to derive novel algorithms. For example, a new second-order inertial optimization method, known as INNA algorithm, is obtained from a continuous inclusion system when the objective function is non-smooth \cite{CB21}. 

%For example, a new second-order inertial optimization method, known as INNA algorithm, is obtained from a continuous inclusion system when the objective function is non-smooth \cite{CB21}.     
% from a continuous dynamical system satisfying nice properties \cite{CB21}.

%The heavy ball method has been well studied in the optimization literature, where its convergence properties were established for objective functions that are convex \cite{A98} and non-convex \cite{AG11, AG22}, respectively. For such study of the Nesterov algorithm, we refer the readers to \cite{SB15, SD21}. Works on the study of the dynamical system for adaptive gradient methods \cite{SG20, MW20} start coming out very recently. In particular, \cite{SG20} introduced a general continuous dynamical system to study the convergence properties of adaptive gradient methods. % while we note that that dynamical system does not fit for AEGD. 

The rest of the paper is organized as follows. In Section 2, we begin with the problem setup, revisiting existing energy-adaptive gradient methods, and introducing  a novel approach.  We delve into the PL condition and its 
associated properties.  In Section 3, we analyze solution properties for the newly  proposed method. In Section 4, we derive a continuous dynamical system and present a convergence result to show the dynamical system is a faithful model for the discrete scheme. In Section 5, we analyze the global existence and asymptotic behavior of the solution to the dynamical system. In Section 6, we obtain  a linear convergence rate for objective functions satisfying the PL condition. In Section 7, we propose several variations and extensions. Some concluding remarks are presented in Section 8.  

\section{Energy-adaptive gradient algorithms}%\label{Sec.2}
Throughout this study, we make the following assumptions:
\vskip2mm
\begin{assumption}\label{asp}
The objective function $f$ in (\ref{minopt}) satisfies
\begin{enumerate}[(1)]
\item $f\in C^2(\R^n)$ and is coercive, denoted by $\lim_{|\theta|\to\infty}f(
\theta)=\infty$.
\item $f$ is bounded from below, so that $F=\sqrt{f+c}$ is well-defined for some $c\in\R$.
\end{enumerate}
\end{assumption}
\vskip2mm
We represent $f^*=\min f(\theta)$, $F^*=\sqrt{f^*+c}$, 
%\min F(\theta)$, 
and assume that $c$ is chosen such that $F=\sqrt{f+c}\geq F^*>0$. We use $\nabla f$ and $\nabla^2 f$ to denote the gradient and Hessian of $f$; and $\partial_i f$ to denote the $i$-th element of $\nabla f$. For $x\in\R^n$, $|x|$  denotes its Euclidean norm ($|x|=\sqrt{x^2_1+...+x^2_n}$). The outer product of  $x,y\in\R^n$ is denoted as $x\otimes y$, and $[m]$  denotes the set $\{1,2,...,m\}$. 

Under Assumption \ref{asp}, recognizing that $f+c=F^2$, we deduce $\nabla f=2F \nabla F$ and 
$$
D^2 f= 2FD^2 F+2\nabla F\otimes \nabla F. 
$$
The fact that $F\geq F^*>0$ ensures that $F\in C^2(\mathbb{R}^n)$ and $F$ is coercive. 

\subsection{Energy-adaptive gradient algorithms.}  
(\ref{aegd}) represents the global and non-stochastic counterpart of AEGD, as introduced in \cite{LT20}. Its notable feature lies in being  unconditionally energy stable,  the energy variable $r_k$,  serving as an approximation to $F(\theta_k)=\sqrt{f(\theta_k)+c}$, strictly decreases  unless $\theta_{k+1}=\theta_k$.
To enhance the convergence speed of AEGD,  momentum was introduced into the energy-adaptive methods. The initial  momentum version, named  AEGDM, was introduced in \cite{LT22}:
\begin{subequations}\label{aegdm}
\begin{align}
v_k &= \beta v_{k-1} + \nabla F(\theta_k),\\
r_{k+1} &= \frac{r_k}{1+2\eta|\nabla F(\theta_k)|^2},\quad r_0=F(\theta_0),\\
\theta_{k+1} &= \theta_k - 2\eta r_{k+1}v_k.
\end{align}
\end{subequations}
Here,  $\beta\in(0,1)$ represents the momentum parameter. The theoretical and numerical superiority of AEGDM over AEGD has been established in \cite{LT22}. Subsequently, in \cite{LT21}, a stochastic version of AEGD with momentum, named SGEM, was introduced, emphasizing  its performance in training large-scale machine learning tasks. The formulation is as follows:

\begin{subequations}\label{gdem}
\begin{align}
m_k &= \beta m_{k-1} + (1-\beta)\nabla f(\theta_k),\\
v_k &= \frac{m_k}{(1-\beta^k)2F(\theta_k)},\\
r_{k+1} &= \frac{r_k}{1+2\eta|v_k|^2},\quad r_0=F(\theta_0),\\
\theta_{k+1} &= \theta_k - 2\eta r_{k+1}v_k,
\end{align}
\end{subequations}
with $\beta \in(0,1)$ being the momentum parameter. SGEM has been shown to achieve a faster convergence compared to AEGD,  while exhibiting superior generalizing capabilities over stochastic gradient descent (SGD) with momentum in training various benchmark deep neural networks \cite{LT21}. 
\iffalse
Denoting $F(\theta)=\sqrt{f(\theta)+c}$, the global SGEM reads as 
\begin{subequations}\label{sgdem}
\begin{align}
m_k &= \beta m_{k-1} + (1-\beta)\nabla f(\theta_k),\quad m_0=\bf{0}\\
v_k &= \frac{m_k}{(1-\beta^k)2F(\theta_k)}\\
r_{k+1} &= \frac{r_k}{1+2\eta|v_k|^2},\quad r_0=\sqrt{f(\theta_0)+c}\\
\theta_{k+1} &= \theta_k - 2\eta r_{k+1}v_k.
\end{align}
\end{subequations}
\fi

\begin{figure}[ht]
\begin{subfigure}[b]{0.5\linewidth}
\centering
\includegraphics[width=1\linewidth]{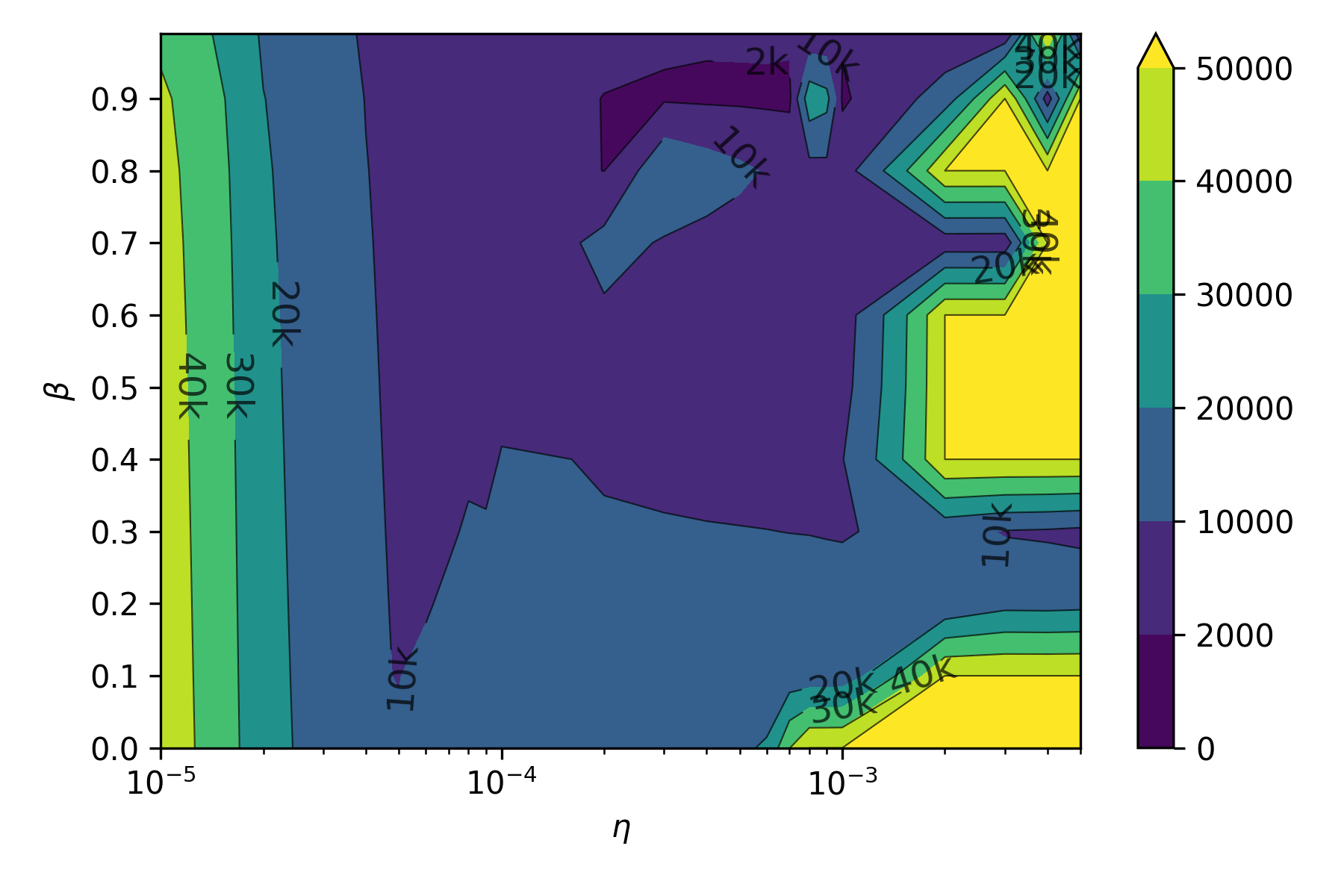}
\caption{AGEM}
\end{subfigure}%
\begin{subfigure}[b]{0.5\linewidth}
\centering
\includegraphics[width=1\linewidth]{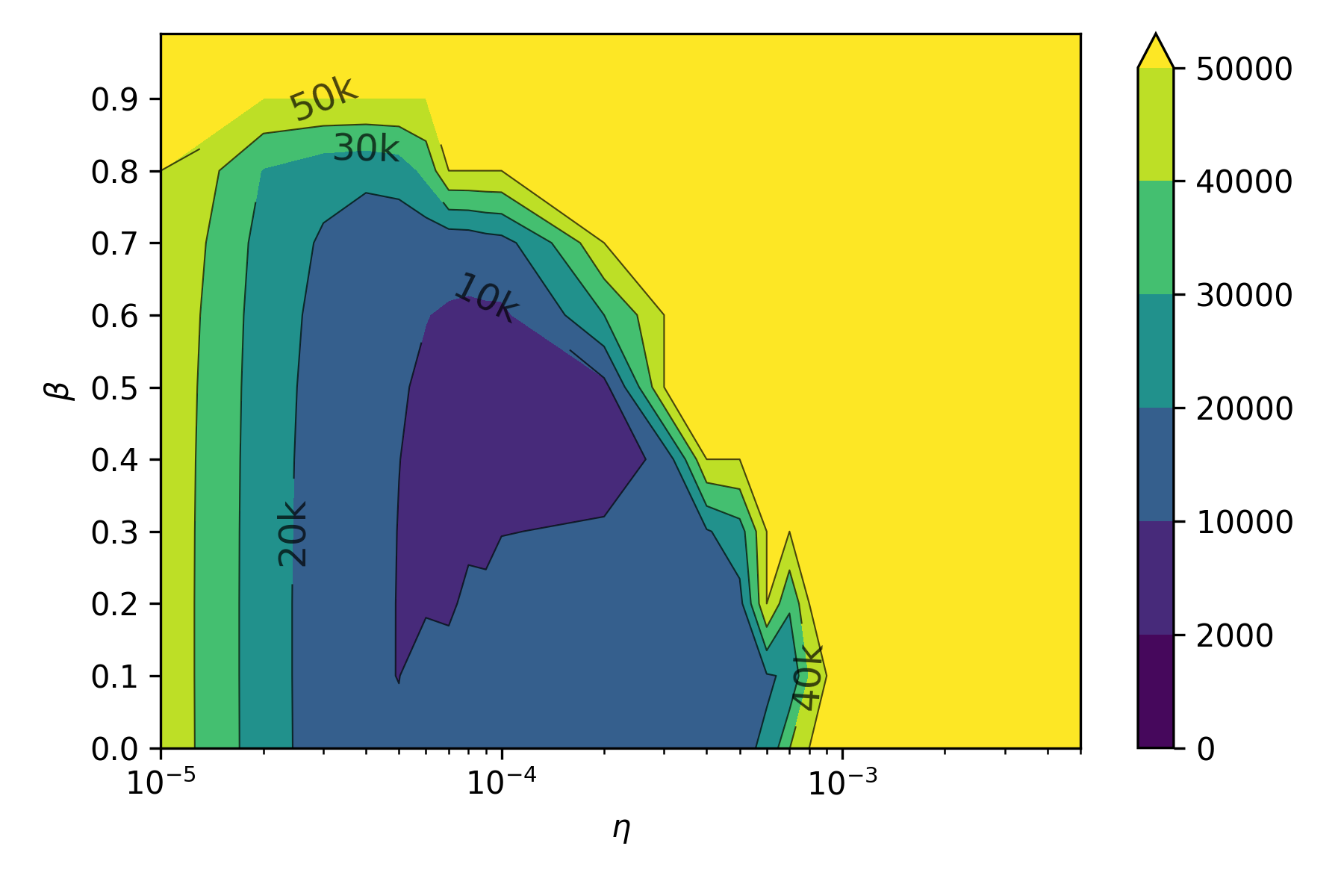}
\caption{SGEM}
\end{subfigure}%
\newline
\begin{subfigure}[b]{0.5\linewidth}
\centering
\includegraphics[width=1\linewidth]{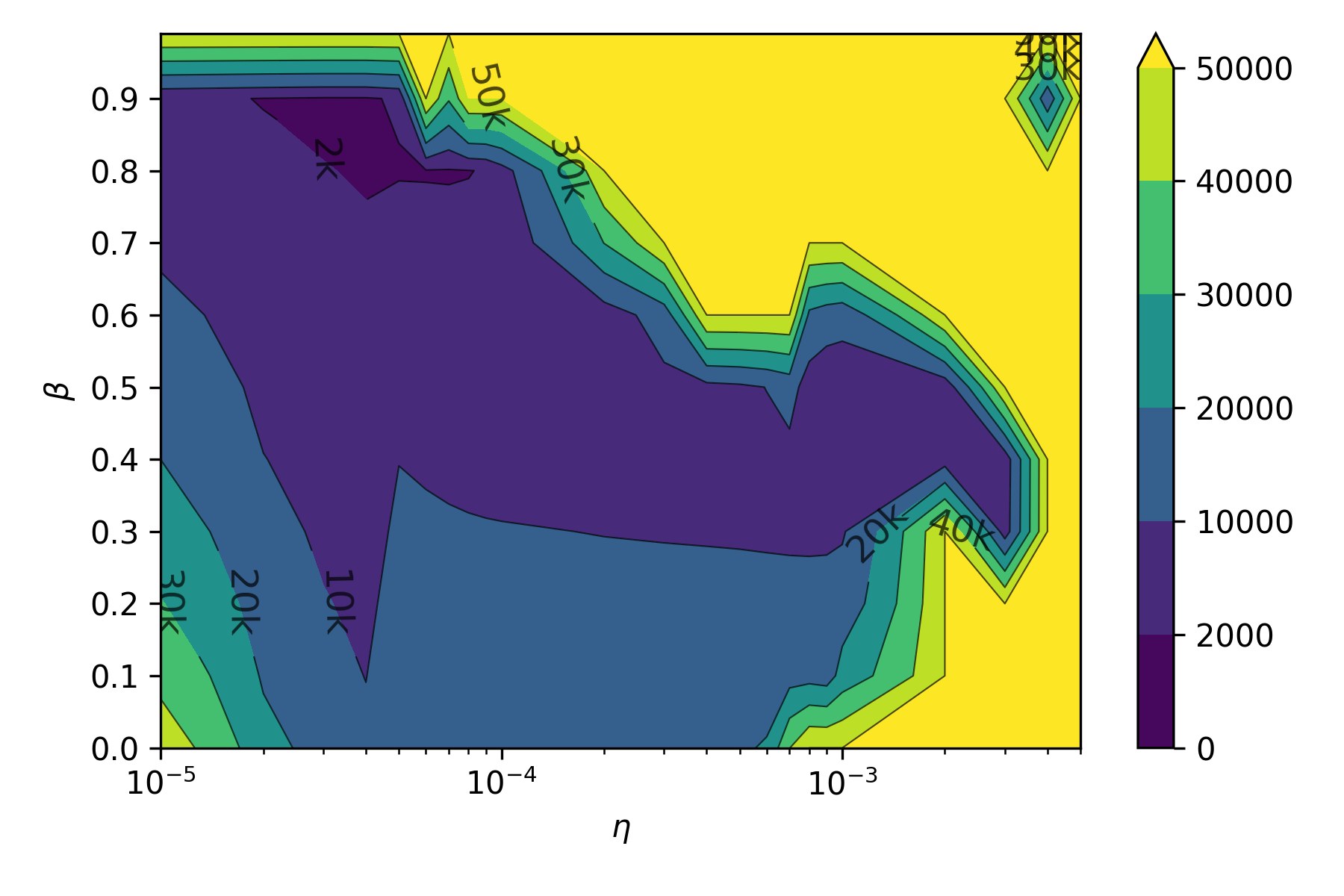}
\caption{AEGDM}
%\label{subfig:quad_a}
\end{subfigure}%
\begin{subfigure}[b]{0.5\linewidth}
\centering
\includegraphics[width=1\linewidth]{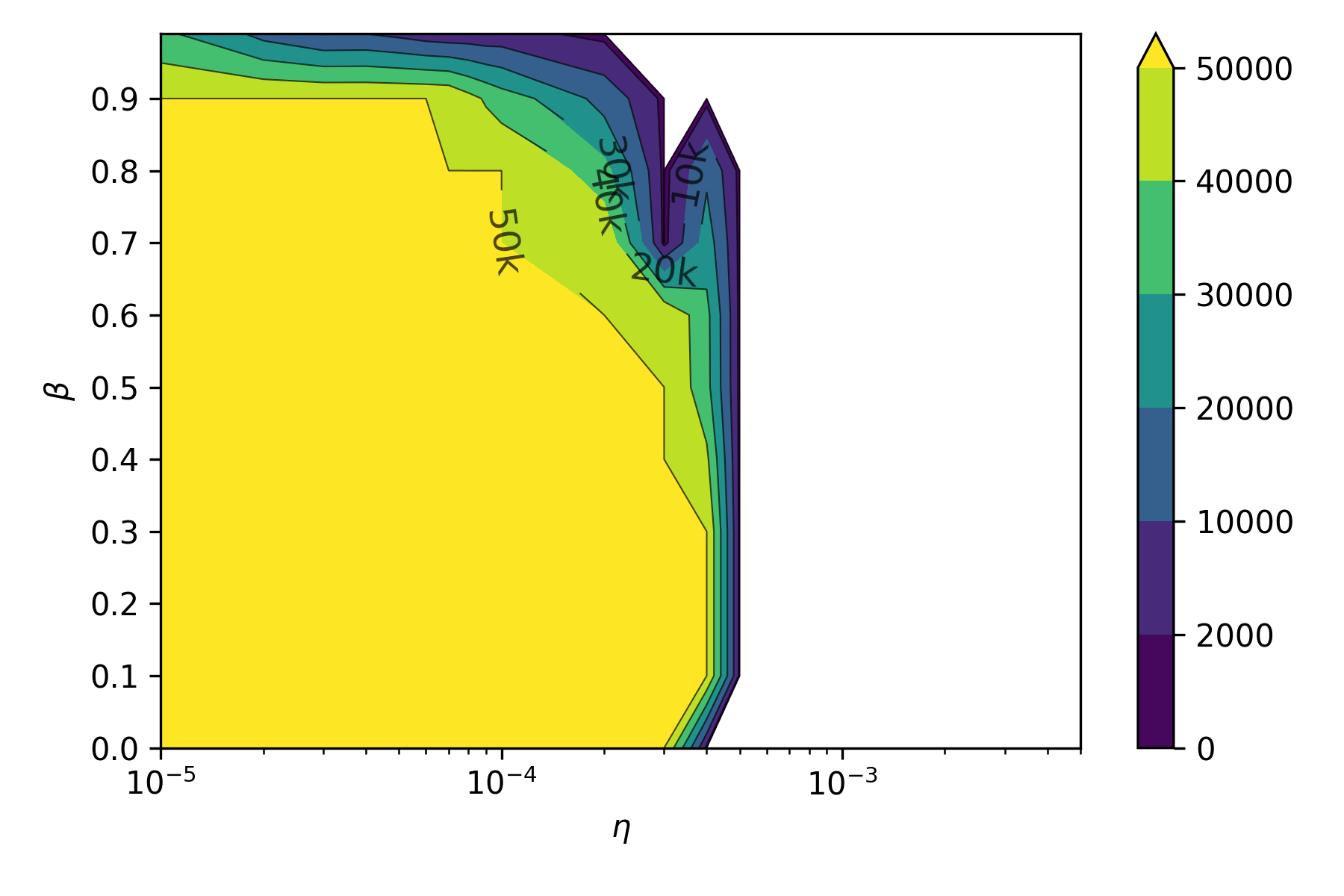}
\caption{GDM}
\end{subfigure}%
\caption{The number of iterations required for the three energy-adaptive methods AEGDM, SGEM, AGEM, and GDM, to converge to the global minima of the two-dimensional Rosenbrock function, across different values of $\beta$ and $\eta$. In the case of GDM, the white areas on the graph indicate divergences.}
\label{fig:rosen-2D-beta-eta}
\end{figure}
% \red{with such comparison, you would need to tell what is the AEGDM} 

This work introduces a novel  variant of (\ref{aegd}), named AGEM:
\begin{subequations}\label{sgdem}
\begin{align}
v_k &= \beta v_{k-1} + (1-\beta)\nabla F(\theta_k),\quad v_{-1}=\bf{0},\\
r_{k+1} &= \frac{r_k}{1+2\eta|v_k|^2},\quad r_0=F(\theta_0),\\
\theta_{k+1} &= \theta_k - 2\eta r_{k+1}v_k.
\end{align}
\end{subequations}
All above energy-adaptive methods -- AEGD, AEGDM, SGEM, and AGEM -- share a common property of unconditional energy stability, as stated in the following. 

\vskip2mm
\begin{theorem}[Unconditional energy stability]\label{estab}%
AEGD (\ref{aegd}), (\ref{aegdm}), (\ref{gdem}), and (\ref{sgdem})  all exhibit  unconditionally energy stability, as expressed by the relation: 
\begin{equation}\label{dr2}
r^2_{k+1}=r^2_k -(r_{k+1}-r_k)^2-\frac{1}{\eta}|\theta_{k+1}-\theta_k|^2.    
\end{equation}
This guarantees the strict decrease of  $r_k$ with convergence to $r^*\geq0$ as $k\to\infty$, accompanied by the limit $\lim_{k\to\infty}|\theta_{k+1}-\theta_k|^2=0$. Furthermore, 
the following inequalities hold: 
\begin{equation}\label{sumQ}
\eta\sum_{k=0}^\infty (r_{k+1}-r_k)^2+
\sum_{k=0}^{\infty}|\theta_{k+1}-\theta_k|^2\leq \eta r^2_0.  
\end{equation}
\end{theorem}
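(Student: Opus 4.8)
The plan is to establish the key identity (\ref{dr2}) first, by pure algebra from the two update rules common to all three schemes, and then read off every remaining assertion from it. The crucial observation is that (\ref{dr2}) involves only $r_k, r_{k+1}$ and $\theta_k, \theta_{k+1}$: the three algorithms (\ref{aegd}), (\ref{gdem}), (\ref{sgdem}) differ solely in how $v_k$ is formed, but they share the energy update $r_{k+1} = r_k/(1+2\eta|v_k|^2)$ and the position update $\theta_{k+1} = \theta_k - 2\eta r_{k+1}v_k$. Hence a single computation settles all three cases at once, and the various momentum recursions for $v_k$ never enter the argument.

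First I would rewrite the energy update as $r_k - r_{k+1} = 2\eta r_{k+1}|v_k|^2$, expressing the decrement of $r$ in terms of $|v_k|^2$, and the position update as $|\theta_{k+1}-\theta_k|^2 = 4\eta^2 r^2_{k+1}|v_k|^2$. Then I would expand $(r_{k+1}-r_k)^2 = r^2_{k+1} - 2 r_k r_{k+1} + r^2_k$ and substitute, so that $r^2_k - (r_{k+1}-r_k)^2 = 2 r_{k+1} r_k - r^2_{k+1} = r^2_{k+1} + 2 r_{k+1}(r_k - r_{k+1})$. Using the first rewriting, $2 r_{k+1}(r_k - r_{k+1}) = 4\eta r^2_{k+1}|v_k|^2 = \frac{1}{\eta}|\theta_{k+1}-\theta_k|^2$, which yields (\ref{dr2}) immediately.

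Monotonicity and convergence follow directly. Since $r_0 = F(\theta_0) > 0$ and each factor $1/(1+2\eta|v_k|^2)$ lies in $(0,1]$, the sequence $r_k$ stays strictly positive; (\ref{dr2}) shows $r^2_{k+1} \leq r^2_k$ with equality only when $\theta_{k+1}=\theta_k$ (equivalently $v_k=0$), so $r_k$ is decreasing and, being bounded below by $0$, converges to some $r^* \geq 0$. For the summability bound (\ref{sumQ}), I would telescope (\ref{dr2}): summing $r^2_k - r^2_{k+1} = (r_{k+1}-r_k)^2 + \frac{1}{\eta}|\theta_{k+1}-\theta_k|^2$ from $k=0$ to $N$ gives $r^2_0 - r^2_{N+1}$ on the left, which is at most $r^2_0$; letting $N\to\infty$ and multiplying through by $\eta$ gives (\ref{sumQ}). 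Convergence of this series forces its general term to $0$, hence $\lim_{k\to\infty}|\theta_{k+1}-\theta_k|^2 = 0$.

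There is no serious obstacle here: the argument is elementary once (\ref{dr2}) is in hand. The only point requiring care is the algebraic identity itself, and in particular recognizing that closing the relation needs the exact form $r_k - r_{k+1} = 2\eta r_{k+1}|v_k|^2$ rather than the superficially natural $r_{k+1}-r_k = -2\eta r_k|v_k|^2$ (the subscript on the $r$ multiplying $|v_k|^2$ must be $k+1$, since it comes from $r_{k+1} = r_{k+1}(1+2\eta|v_k|^2) - 2\eta r_{k+1}|v_k|^2$). Keeping that subscript bookkeeping straight is what makes the two sides match exactly; everything else is telescoping and a monotone-bounded-sequence argument.
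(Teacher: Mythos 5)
Your proposal is correct and follows essentially the same route as the paper's proof: both derive (\ref{dr2}) from the shared updates via the identities $r_k - r_{k+1} = 2\eta r_{k+1}|v_k|^2$ and $|\theta_{k+1}-\theta_k|^2 = 4\eta^2 r^2_{k+1}|v_k|^2$, then obtain (\ref{sumQ}) by telescoping and conclude $|\theta_{k+1}-\theta_k|\to 0$ from summability. The only difference is cosmetic (you expand $(r_{k+1}-r_k)^2$ directly, while the paper writes $r^2_{k+1}-r^2_k = 2r_{k+1}(r_{k+1}-r_k)-(r_{k+1}-r_k)^2$), and your emphasis on keeping the subscript $k+1$ on the $r$ multiplying $|v_k|^2$ is precisely the point the paper's computation hinges on.
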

% \vskip2mm

\begin{proof}
Starting with the common AEGD  equations: 
\begin{align*}
r_{k+1} &= \frac{r_k}{1+2\eta|v_k|^2},\\
\theta_{k+1} &= \theta_k - 2\eta r_{k+1}v_k, 
\end{align*}
we derive the relation: 
$$
r_{k+1} + 2\eta r_{k+1}|v_k|^2 = r_k,\quad |\theta_{k+1}-\theta_k|^2 = 4\eta^2r^2_{k+1}|v_k|^2.
$$
Hence, 
\begin{align*}
r^2_{k+1} - r^2_k &= 2r_{k+1}(r_{k+1}-r_k) - (r_{k+1}-r_k)^2\\
&= -4\eta r^2_{k+1}|v_k|^2 - (r_{k+1}-r_k)^2\\
&=-\frac{1}{\eta}|\theta_{k+1}-\theta_k|^2 - (r_{k+1}-r_k)^2.
\end{align*}
This establishes (\ref{dr2}). As $\{r_k\}$ is decreasing and bounded from below, it converges to $r^*$. Summing   (\ref{dr2}) over $k$ leads to 
\begin{align*}
\frac{1}{\eta}\sum_{k=0}^\infty |\theta_{k+1}-\theta_k|^2 +\sum_{k=0}^\infty(r_{k+1}-r_k)^2
&= \lim_{k\to\infty}(r^2_0-r^2_k)= r^2_0-(r^*)^2\leq r^2_0.
\end{align*}
This implies (\ref{sumQ}), leading to   $\lim_{k\to\infty}|\theta_{k+1}-\theta_k|^2=0$.
\end{proof}
\vskip2mm

%\blue{The excellent performance of (\ref{gdem}) has been demonstrated in \cite{LT21} across diverse optimization tasks. An experimental illustration on a non-convex function ( Figure \ref{fig:rosen}) shows that  AGEM exhibits superior  convergence compared to SGEM, AEGD and GDM (GD with momentum). } 

To gain a deeper understanding of how AGEM, scheme inspired by its  continuous ODE system, distinguishes itself from the other two energy-adaptive methods, AEGDM \cite{LT22} and SGEM \cite{LT21}, we conducted a robustness test on all three methods and GDM (GD with momentum). The test used the Rosenbrock function:
$$
f(x)=\sum_{i=1}^{n-1} (1-x_i)^2+100*(x_{i+1}-x_i^2)^2,
$$
where $n$ is an integer. This nonconvex function poses a challenge due to its global minima  being located at
the bottom of a long, narrow valley, making the optimization problem inherently difficult. Setting $n=2$, we investigated the impact of the momentum parameter $\beta$ and step size $\eta$ on the convergence performance of each method, with results  presented in Figure \ref{fig:rosen-2D-beta-eta}. Our observations indicate that,   in comparison to GDM, the energy-adaptive methods exhibit greater  robustness in the sense that their convergence performance is less sensitive to variations in $\beta$ and $\eta$: enabling convergence within a certain number of iterations to global minima across a wide range of parameter values. In contrast, GDM diverges when $\eta$ exceeds a certain threshold (e.g. $5e-4$). Moreover, AGEM exhibits better robustness than the other two methods,  achieving the fastest convergence under the settings  $\beta=0.9$ and  $\eta=7e-4$.

Addressing  a more challenging scenario in higher dimensions, we considered the Rosenbrock function with $n=100$. The landscape in this case is quite intricate, with two minimizers, a global minimizer at $x^*=(1,...,1)^\top$ with $f(x^*)=0$ and a local minimizer near $x=(-1,1,...,1)^\top$  with $f\sim 3.99$. A comparison between GDM and AGEM is presented in Figure \ref{fig:rosen-2D-100D}. Both methods achieve their fastest convergence with $\eta=1e-4$ and $\eta=3e-5$, respectively. Notably,  as $\eta$ increases for both methods,  GDM becomes trapped at a local minima, while AGEM continues to converge to the global minima, though at a slower pace.

\begin{figure}[ht]
\begin{subfigure}[b]{0.5\linewidth}
\centering
\includegraphics[width=1\linewidth]{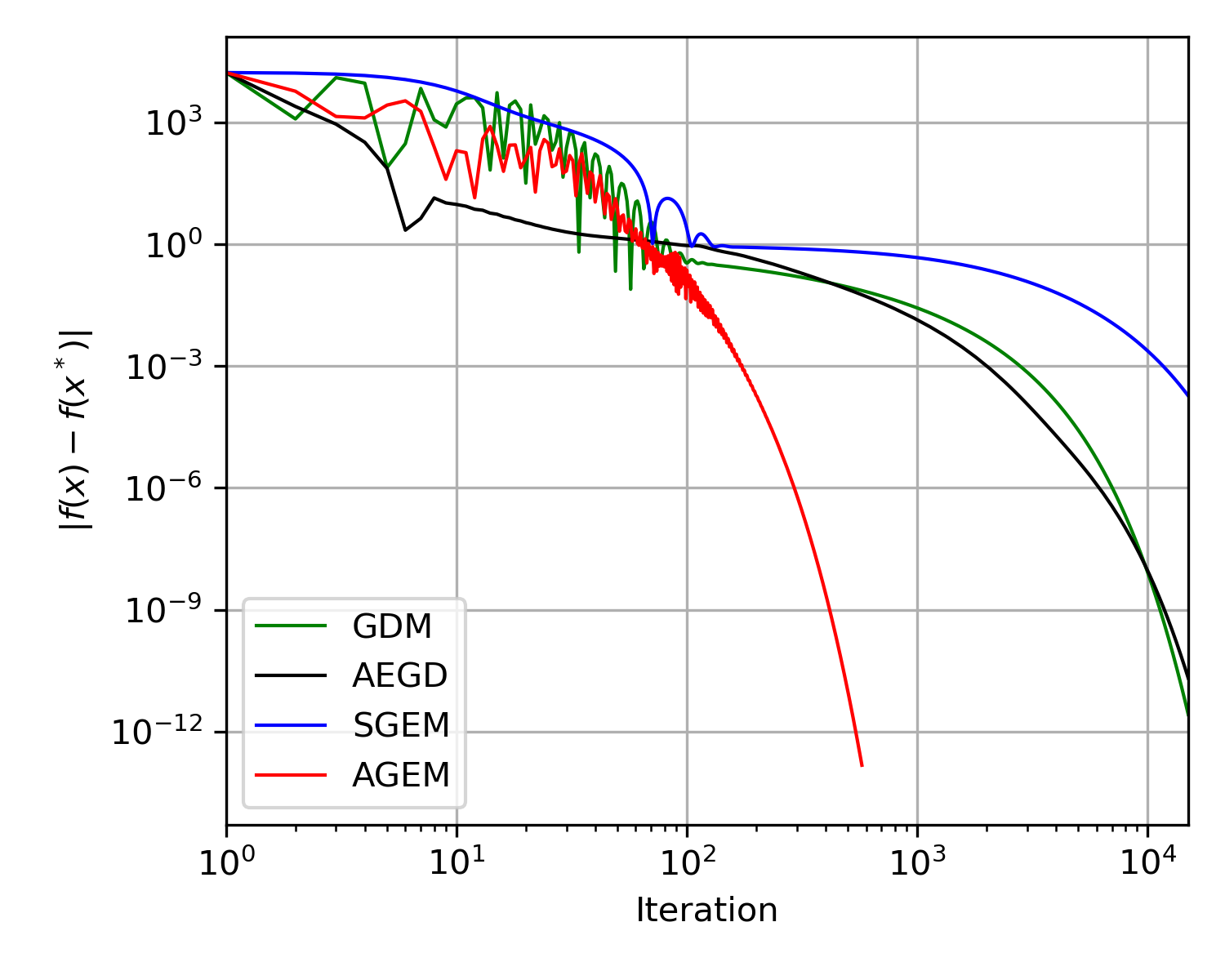}
\caption{$n=2$}
\end{subfigure}%
\begin{subfigure}[b]{0.5\linewidth}
\centering
\includegraphics[width=1\linewidth]{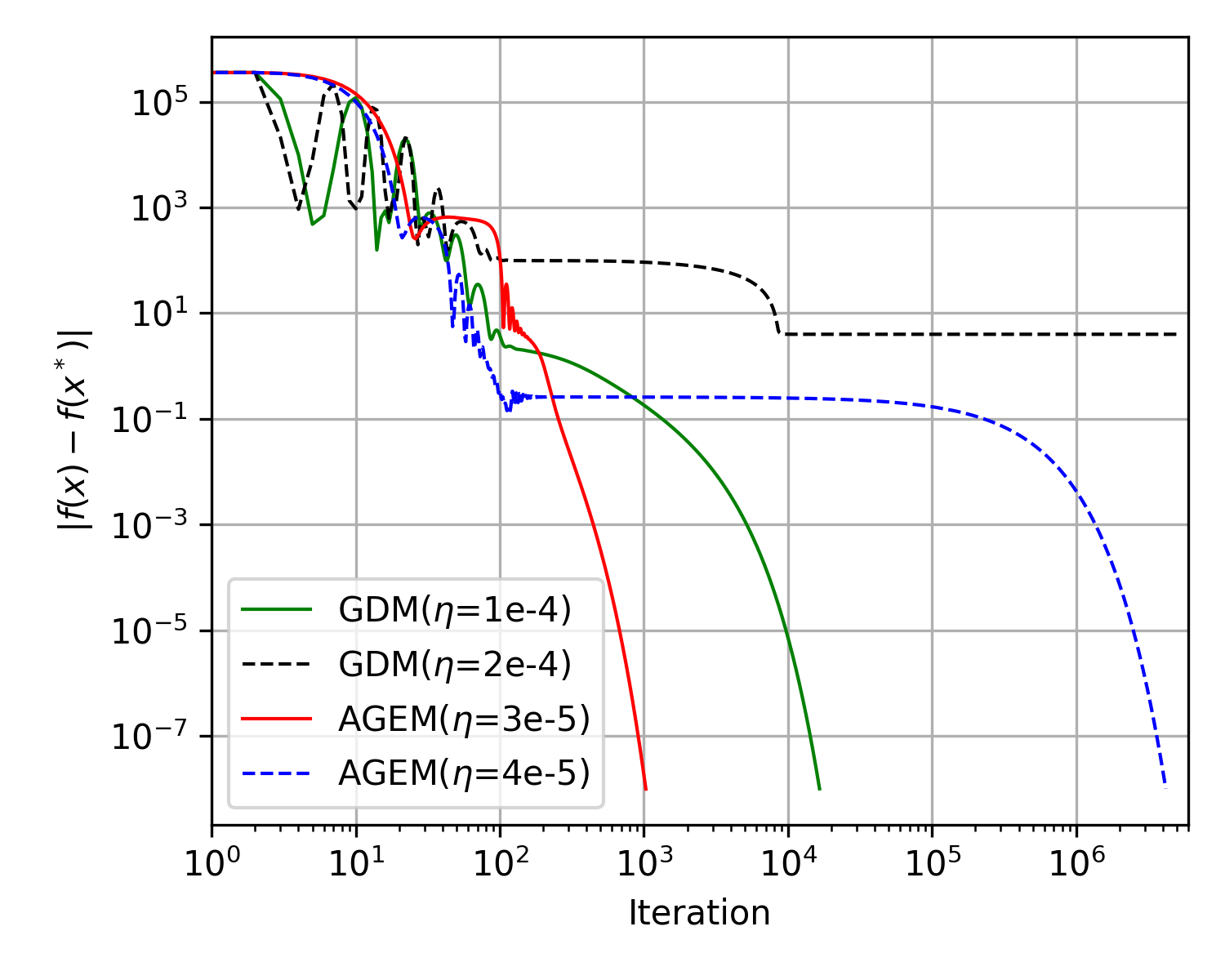}
\caption{$n=100$}
\end{subfigure}%
\caption{A comparative analysis showcasing the performance of various methods on the Rosenbrock function.}
\label{fig:rosen-2D-100D}
\end{figure}

The primary goal of this study is to provide an in-depth analysis of AGEM, considering  both its discrete and continuous formulations. These findings are expected to be applicable to (\ref{gdem}). Our specific focus lies in understanding the convergence characteristics of AGEM. This examination will be conducted by exploring continuous dynamics for objective functions that are generally smooth. Additionally, leveraging a structural condition on $f$ -- known as the Polyak-\L ojasiewicz (PL) condition, we establish a convergence rate in Section \ref{rate}.

\subsection{PL condition for non-convex objectives.} In this section, we provide a brief overview of the PL condition and its relevance to the loss function within the context of deep neural networks.  

\vskip2mm
\begin{definition}
For a differentiable function $f:\R^n\to\R$ with ${\rm argmin} f(\theta)\neq\emptyset $, we say $f$ satisfies the PL condition if there exists some constant $\mu>0$, such that the following inequality holds for any $\theta^*\in{\rm argmin} f(\theta)$:
\begin{equation}\label{PL}
\frac{1}{2}|\nabla f(\theta)|^2 \geq \mu (f(\theta)-f(\theta^*)),\quad\forall\theta\in\R^n.
\end{equation}
\end{definition}
\vskip2mm

This  condition implies that  a local minimizer $\theta^*$ is also a global minimizer. 
It's important to note that strongly convex functions satisfy the PL condition, although a function satisfying the PL condition need not be   convex. For instance, the function 
$$
f(\theta)=\theta^2+3\sin^2\theta
$$ 
is not convex but satisfies the PL condition with $\mu=\frac{1}{32}, f^*=0$. 

As we mentioned in the introduction, the PL condition has attracted increasing attention in the context of training deep neural networks. In this informal exploration, we investigate how a commonly used loss function in deep learning tasks can satisfy the PL condition. Consider the following loss function
$$
f(\theta):=\frac{1}{2}\sum_{i=1}^{m}(g(x_i;\theta)-y_i)^2,
$$
where $f(\theta^*)=0$ for any minimizer $\theta^*$. Here,  $\{x_i,y_i\}_{i=1}^{m}$ represents training data points, and $g$ is a deep neural network parameterized by $\theta$. The gradient of this loss function is given by: 
$$
\nabla f(\theta) = \sum_{i=1}^{m}(g(x_i;\theta)-y_i)\frac{\partial g(x_i;\theta)}{\partial \theta}.
$$
Denoting $u=(u_1,...,u_m)$ where $u_i=g(x_i;\theta)-y_i$, we have
\begin{equation}\label{fpl}
\frac{1}{2}|\nabla f(\theta)|^2 = \frac{1}{2}\sum_{i=1}^{m}\sum_{j=1}^{m}u_iH_{ij}u_j=\frac{1}{2}u H u^\top,    
\end{equation}
where $H$ is a $m\times m$ matrix with $(i,j)$ entry defined by
\begin{equation}\label{H}
H_{ij}= \langle \frac{\partial g(x_i;\theta)}{\partial \theta},\frac{\partial g(x_j;\theta)}{\partial \theta}\rangle. 
\end{equation}
From (\ref{fpl}) it is evident that if the smallest eigenvalue of $H$ is bounded from below by $\mu$, then: 
$$
\frac{1}{2}|\nabla f(\theta)|^2 \geq \frac{1}{2}\mu |u|^2=\mu f(\theta).
$$
This condition aligns with  the PL condition. Notably,  
using over-parameterization and random initialization, \cite{DZ19} has proved that for a two-layer neural network with rectified linear unit (ReLU) activation, the smallest eigenvalue of $H$ is indeed strictly positive. This eigenvalue proves to be crucial in deriving the linear convergence rate of gradient descent, as illustrated in \cite{DZ19}. A similar convergence result for implicit networks is also derived in \cite{GL22}.

\section{Solution properties of the AGEM algorithm}
In this section, we establish that the iterates generated by (\ref{sgdem}) are uniformly bounded and convergent if $\eta$ is suitably small.

\vskip2mm
\begin{theorem}\label{ubdd}
Under Assumption \ref{asp}, for $(\theta_k, r_k, v_k)$ generated by (\ref{sgdem}), if $\eta <\eta^*$ for some $\eta^*>0$, we have the following results:
\vskip1mm
\begin{enumerate}[(1)]%[label={\arabic*.}]
\item Uniformly boundedness: 
$\theta_k$ and $v_k$ are bounded for all $k\geq 1$. 
Moreover, 
$$
%r_k\leq r_0,\quad 
\theta_k \in\{\theta\in\R^n\;|\;F(\theta) \leq 2F(\theta_0)\},\quad |v_k|\leq\max_{j\leq k}|\nabla F(\theta_j)|.  
$$
\item Lower bound on energy:  The lower bound of $r_k$ is strictly positive:
$$
r_k \geq r_{k+1} >  r^*>0, \quad \forall k\geq 0.
$$
\item Convergence:  As $k\to \infty$, we have  
$$
r_k \to r^*>0, \quad v_k \to 0, \quad \nabla f(\theta_k) \to 0. 
$$
\end{enumerate}
\end{theorem}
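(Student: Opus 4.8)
The plan is to treat the three items in order, using the energy-stability identity \eqref{dr2} and the summability bound \eqref{sumQ} of Theorem \ref{estab} as the backbone, and to reduce everything to the single hard point, namely that $r_k$ cannot decay to $0$.

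\emph{Boundedness.} First I would unroll the momentum recursion to $v_k=(1-\beta)\sum_{j=0}^{k}\beta^{k-j}\nabla F(\theta_j)$; since the coefficients sum to $1-\beta^{k+1}\le 1$, the triangle inequality gives $|v_k|\le\max_{j\le k}|\nabla F(\theta_j)|$ at once. For the location of $\theta_k$, coercivity makes the sublevel set $S:=\{\theta:F(\theta)\le 2F(\theta_0)\}$ compact; let $L_F$ and $G$ bound $\|\nabla^2F\|$ and $|\nabla F|$ on $S$. I would then prove $\theta_k\in S$ by induction using the Lyapunov function $\Phi_k:=F(\theta_k)+\frac{\beta}{1-\beta}\eta\, r_k|v_{k-1}|^2$. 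Combining the descent estimate $F(\theta_{k+1})\le F(\theta_k)+\nabla F(\theta_k)\cdot(\theta_{k+1}-\theta_k)+\frac{L_F}{2}|\theta_{k+1}-\theta_k|^2$ with $\nabla F(\theta_k)=\frac{1}{1-\beta}(v_k-\beta v_{k-1})$, the identity $|\theta_{k+1}-\theta_k|^2=4\eta^2r_{k+1}^2|v_k|^2$, and the crude bound $v_{k-1}\cdot v_k\le\frac12(|v_{k-1}|^2+|v_k|^2)$ should make the $|v_{k-1}|^2$ contributions telescope into $\Phi$ and leave the coefficient of $|v_k|^2$ equal to $\eta r_{k+1}(-2+2L_F\eta r_{k+1})$, which is $\le 0$ once $\eta\le 1/(L_F F(\theta_0))$. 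As $v_{-1}=0$ gives $\Phi_0=F(\theta_0)$, I obtain $F(\theta_k)\le\Phi_k\le F(\theta_0)$, hence $\theta_k\in S$ (the strict gap between $\{F\le F(\theta_0)\}$ and $S$ also keeps the segment $[\theta_k,\theta_{k+1}]\subset S$ for small $\eta$, validating the descent estimate), and then $|v_k|\le G$.

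\emph{Lower bound on energy (the crux).} The obstruction is that the descent only controls $\sum_k r_{k+1}|v_k|^2$, which via $2\eta r_{k+1}|v_k|^2=r_k-r_{k+1}$ equals $(r_0-r^*)/(2\eta)$ and stays finite even if $r^*=0$, so it cannot by itself force $\sum_k|v_k|^2<\infty$. Instead I would show that $r_k$ shadows $F(\theta_k)$: with $w_k:=F(\theta_k)-r_k$ (so $w_0=0$), Taylor expansion together with the two scheme identities gives $w_{k+1}-w_k=-2\eta r_{k+1}\,d_k\cdot v_k+R_k$, where $d_k:=\nabla F(\theta_k)-v_k$ and $|R_k|\le 2L_F\eta^2r_{k+1}^2|v_k|^2$. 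The momentum lag obeys $d_k=\beta\bigl(d_{k-1}+\nabla F(\theta_k)-\nabla F(\theta_{k-1})\bigr)$ with $d_0=\beta\nabla F(\theta_0)$, so $L_F$-smoothness yields $|d_k|\le\beta^{k+1}G+2L_F\eta\sum_{j=1}^{k}\beta^{k-j+1}r_j|v_{j-1}|$. Inserting this, the geometric part contributes $O(\eta)$, while the double sum is handled by setting $q_k:=\sqrt{r_{k+1}}\,|v_k|$, noting $\sum_k q_k^2\le r_0/(2\eta)$ from \eqref{sumQ}, and applying Young's convolution inequality, so that the resulting factor $1/\eta$ cancels against the $\eta^2$ prefactor. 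With $\sum_k|R_k|\le L_F\eta r_0^2$ as well, this should give $\sum_k|w_{k+1}-w_k|\le C\eta$ for a constant $C=C(L_F,G,r_0,\beta)$ independent of $\eta$, whence $|F(\theta_k)-r_k|\le C\eta$ for all $k$. Choosing $\eta^*$ so small that $C\eta^*<F^*$ then yields $r_k\ge F(\theta_k)-C\eta\ge F^*-C\eta>0$, i.e. $r^*>0$.

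\emph{Convergence.} This follows quickly: $r_k\downarrow r^*>0$ is now in hand, and $|\theta_{k+1}-\theta_k|^2=4\eta^2r_{k+1}^2|v_k|^2\to 0$ from \eqref{sumQ} together with $r_{k+1}\ge r^*>0$ forces $v_k\to 0$; then $\nabla F(\theta_k)=\frac{1}{1-\beta}(v_k-\beta v_{k-1})\to 0$, so $\nabla f(\theta_k)=2F(\theta_k)\nabla F(\theta_k)\to 0$ since $F$ is bounded on $S$. I expect essentially all of the difficulty to lie in the energy lower bound: controlling the momentum lag $d_k$ and extracting the saving power of $\eta$ from the convolution estimate is the delicate step, and it is precisely what excludes the otherwise self-consistent scenario in which the iterates freeze at a non-critical point while $r_k\to 0$.
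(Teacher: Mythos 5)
Your proposal is correct. Parts 1 and 3 are essentially the paper's own argument: your $\Phi_k$ is exactly the paper's Lyapunov function $Q_k=F(\theta_k)+\epsilon r_k|v_{k-1}|^2$ of (\ref{Qk}) (the paper handles the cross term via the identity $2\langle v_k-v_{k-1},v_k\rangle=|v_k|^2-|v_{k-1}|^2+|v_k-v_{k-1}|^2$ where you use Cauchy--Schwarz, an immaterial difference), your induction keeping the iterates and the segments $[\theta_k,\theta_{k+1}]$ inside a fixed sublevel set for small $\eta$ is the paper's two-set argument with $\Sigma_2\subset\Sigma_3$ and its threshold $\eta_1$, and your Part 3 coincides with the paper's. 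The genuine divergence is Part 2, which you rightly call the crux. The paper gets the lower bound on $r_k$ almost for free from the same descent inequality (\ref{Qkk}): the negative term $-2\eta r_{k+1}|v_k|^2$ there equals $r_{k+1}-r_k$ by the $r$-update, so instead of discarding it one keeps it and telescopes, giving $F(\theta_k)\le F(\theta_0)+(r_k-r_0)+\tfrac{1}{2}\eta r_0^2\max L_F$, i.e. $r_k\ge F(\theta_k)-O(\eta)\ge F^*-O(\eta)$ since $r_0=F(\theta_0)$ --- a one-sided shadowing bound obtained in one line. You instead prove the two-sided bound $|F(\theta_k)-r_k|\le C\eta$ by expanding the increments of $w_k=F(\theta_k)-r_k$, controlling the momentum lag $d_k$ through its geometric recursion, and closing with a Young/Cauchy--Schwarz convolution estimate against $\sum_k r_{k+1}|v_k|^2\le r_0/(2\eta)$; I checked that the bookkeeping closes (the $\eta^2$ prefactor indeed cancels the $1/\eta$ from the convolution, and $\sum_k|R_k|\le L_F\eta r_0^2$), so this is a valid, genuinely different proof of the key step. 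What each buys: the paper's argument is much shorter but hinges on noticing that the dissipation term in the Lyapunov descent is exactly the energy decrement; yours is longer and needs the lag recursion plus convolution machinery, but it makes the mechanism explicit ($r_k$ tracks $F(\theta_k)$ uniformly to $O(\eta)$) and would survive perturbations of the scheme for which that exact cancellation is unavailable. One cosmetic slip: $\sum_k q_k^2\le r_0/(2\eta)$ follows from telescoping $r_k-r_{k+1}=2\eta r_{k+1}|v_k|^2$, as you yourself note earlier in the same paragraph, not from (\ref{sumQ}).
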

% \vskip2mm

\begin{proof}   
Define  \begin{equation}\label{Qk}
 Q_k: =F(\theta_k)+\epsilon r_k|v_{k-1}|^2, 
\end{equation}
where $\epsilon=\frac{\eta\beta}{1-\beta}$, then we claim:   
\begin{equation}\label{Qkk}
Q_{k+1} - Q_k \leq  -\epsilon r_{k+1}|v_k-v_{k-1}|^2 - 2\eta r_{k+1}|v_k|^2 + \frac{L_F(\theta_{k+\frac{1}{2}})}{2}|\theta_{k+1}-\theta_k|^2, 
%&\leq \frac{L_F(\theta_{k+\frac{1}{2}})}{2}\|\theta_{k+1}-\theta_k\|^2,
\end{equation}
where $L_F(\theta_{k+\frac{1}{2}})$ is the largest eigenvalue of $\max_{s\in[0,1]}D^2F((1-s)\theta_k+s\theta_{k+1})$. In fact, 
\begin{equation}\label{ddQ}
Q_{k+1} - Q_k = F(\theta_{k+1})-F(\theta_k) + \epsilon r_{k+1}|v_{k}|^2 - \epsilon r_k|v_{k-1}|^2,    
\end{equation}
where
\begin{equation}\label{dF}
F(\theta_{k+1})-F(\theta_k)
\leq \langle \nabla F(\theta_k), \theta_{k+1}-\theta_k\rangle + \frac{L_F(\theta_{k+\frac{1}{2}})}{2}|\theta_{k+1}-\theta_k|^2,
\end{equation}
in which,
\begin{align}\label{gradF}
%&\quad\; F(\theta_{k+1})-F(\theta_k) \\
&\quad\langle \nabla F(\theta_k), \theta_{k+1}-\theta_k\rangle \notag\\
&= -2\eta r_{k+1}\langle \nabla F(\theta_k), v_k\rangle \notag\\
&= -2\eta r_{k+1}\langle \nabla F(\theta_k)-v_k, v_k\rangle - 2\eta r_{k+1}|v_k|^2 \notag\\
&= -2\eta r_{k+1}\langle \frac{\beta}{1-\beta}(v_k-v_{k-1}), v_k\rangle - 2\eta r_{k+1}|v_k|^2 \notag\\
&= -\epsilon r_{k+1}(|v_k|^2-|v_{k-1}|^2+|v_k-v_{k-1}|^2)- 2\eta r_{k+1}|v_k|^2 \notag\\
&\leq -\epsilon r_{k+1}|v_{k}|^2 + \epsilon r_k|v_{k-1}|^2 -\epsilon r_{k+1}|v_k-v_{k-1}|^2 - 2\eta r_{k+1}|v_k|^2,
\end{align}
where $r_{k+1}<r_k$ is used in the last inequality. Connecting (\ref{ddQ}), (\ref{dF}), (\ref{gradF}) gives (\ref{Qkk}). 

\vskip2mm
(1) To demonstrate  uniformly boundedness, we sum up (\ref{Qkk}) from $0$ to $k$, omitting negative terms on the right-hand side to obtain:  
\begin{align}\label{Qk0}
Q_{k+1}\leq Q_0+\frac{\max_{j\leq k}L_F(\theta_{j+\frac{1}{2}})}{2}\sum_{j=0}^{k}|\theta_{j+1}-\theta_j|^2.
\end{align}
Using $Q_0=F(\theta_0)$ and the bound in (\ref{sumQ}), we derive: 
\begin{equation}\label{Qr}
F(\theta_{k+1}) \leq F(\theta_0)+ \frac{1}{2} \eta r^2_0 \max_{j\leq k}L_F(\theta_{j+\frac{1}{2}}). 
\end{equation}
Next, we establish that if $\eta$ is suitably small, $\{\theta_k\}$ is uniformly bounded,  ensuring the boundedness of $\{v_k\}$, as indicated by:   
\begin{equation}\label{vk}
|v_k|\leq (1-\beta)\sum_{j=0}^{k}\beta^{k-j}|\nabla F(\theta_j)|%\leq (1-\beta)\sum_{j=0}^{k}\beta^{k-j}C
\leq \max_{j\leq k}|\nabla F(\theta_j)|.    
\end{equation}
To proceed, we introduce the notation: 
$$
\Sigma_k: =\{\theta\in\R^n\;|\; F(\theta)\leq k F(\theta_0)\}, \quad k=2, 3
$$
and $\tilde \Sigma_k$ for the convex hull\footnote{The convex hull of a given set $S$ is the (unique) minimal convex set containing $S$.} of $\Sigma_k$. By assumption, $\Sigma_2 \subset \Sigma_3$ are bounded domains.  Our objective is to identify conditions on $\eta$ such that $\{\theta_k\} \in \Sigma_2$ for all $k\geq 1$. This can be 
accomplished through  induction. Since $\theta_0\in \Sigma_2$, 
tt suffices to show that under suitable conditions on $\eta$, we have:
$$
\{\theta_j\}_{j\leq k}\subset\Sigma_2 \Rightarrow
\theta_{k+1} \in \Sigma_2 
$$
The main task is to establish  a sufficient condition on $\eta$, for which we argue in two steps: \\
(i) By the continuity of $F$, there exists $\delta_F(r_0)$ such that if 
\begin{equation}\label{ss}
  |\theta_{k+1}-\theta_k|< \delta_F(r_0),
\end{equation}
we have 
$$
|F(\theta_{k+1})-F(\theta_k)|<r_0.
$$
This implies $F(\theta_{k+1}) \leq F(\theta_k)+r_0\leq 3r_0$, indicating 
$
\theta_{k+1} \in \Sigma_3.
$
On the other hand,
$$
|\theta_{k+1}-\theta_k|=|-2\eta r_{k+1}v_k|\leq 2\eta r_0 \max_{j\leq k}|\nabla F(\theta_j)|\leq 2\eta r_0\max_{\theta\in\Sigma_2}|\nabla F(\theta)|.
$$
This ensures (\ref{ss}) if we set 
$$
\eta<\eta_1:=\frac{\delta_F(r_0)}{2r_0\max_{\theta\in\Sigma_2}|\nabla F(\theta)|}.
$$
(ii) Considering that $\theta_{k+1}\in\Sigma_3$ and $\{\theta_j\}_{j\leq k}\in\Sigma_2\subset\Sigma_3$, we have $\{\theta_{j+\frac{1}{2}}\}_{j\leq k}\subset \tilde\Sigma_3$. 
By (\ref{Qr}), we get:   
\begin{equation}\label{Fupp}
F(\theta_{k+1}) \leq F(\theta_0)+ \frac{1}{2} \eta r^2_0 \max_{\theta\in \tilde \Sigma_3}L_F(\theta).
\end{equation}
This implies 
$$
F(\theta_{k+1}) \leq 2F(\theta_0),  
$$
provided 
$$
\eta \leq \eta_2:=\frac{2}{r_0\max_{\theta\in \tilde \Sigma_3}L_F(\theta)}.
$$
Hence, if $\eta <\min\{\eta_1, \eta_2\}$, then $\theta_k \in \Sigma_2$ for all $k$.

\vskip2mm
(2) Theorem \ref{estab} asserts that $r_k\to r^*\geq 0$ as $k\to\infty$. Here, we identify a sufficient condition on $\eta$ to ensure $r^*>0$. Following a similar argument as in (1), while retaining the negative term 
$$
-2\eta r_{k+1} |v_k|^2=r_{k+1}-r_k, 
$$
on the rand-hand side of (\ref{Qkk}), we derive: 
$$
F(\theta_k) \leq F(\theta_0) + r_k - r_0+ \frac{1}{2}\eta r_0^2  \max_{\theta\in \tilde \Sigma_3}L_F(\theta), 
$$
under the condition that $\eta<\eta_1$. Since $r_0=F(\theta_0)$, we obtain: 
$$
r_k\geq F(\theta_k) -\frac{1}{2}\eta r_0^2  \max_{\theta\in \tilde \Sigma_3}L_F(\theta).  
$$
Letting $k\to\infty$, we have: 
$$
r^*\geq \liminf_{k\to \infty} F(\theta_k) -\frac{1}{2}\eta r_0^2  \max_{\theta\in \tilde \Sigma_3}L_F(\theta)
\geq F^* -\frac{1}{2}\eta r_0^2  \max_{\theta\in \tilde \Sigma_3}L_F(\theta)>0,
$$
provided
$$
\eta\leq\eta_3:=\frac{2F^*}{r^2_0 \max_{\theta\in \tilde \Sigma_3}L_F(\theta)}.
$$
Therefore, if $\eta<\min\{\eta_1,\eta_3\}$, then $r^*>0$. Note that $\eta_3\leq\eta_2$, hence both (1) and (2) are satisfied if we choose $\eta<\eta^*:=\min\{\eta_1,\eta_3\}$.

\vskip2mm
(3) In Theorem \ref{estab},  we have shown that $|\theta_{k+1}-\theta_k|\to0$. Using (\ref{sgdem}), we deduce that 
$$
 r_{k+1}|v_k|\to 0.
$$
According to (2), $r_{k}\to r^*>0$ when $\eta<\eta^*$. Consequently, $|v_k|\to0$ must hold. As indicated by (\ref{sgdem}a), this further implies $\nabla F(\theta_k)\to0$, consequently, $\nabla f(\theta_k)=2F(\theta_k)\nabla F(\theta_k)\to 0$ also holds.
\end{proof}
\vskip2mm

\begin{remark}
When $f$ is assumed to be $L$-smooth, $L_F$ becomes uniformly bounded. In this scenario, the solution's boundedness is attained  without imposing any restriction on $\eta$, as evident from $(\ref{Qr})$.  However,  it is crucial to note that the $L$-smoothness assumption excludes a significant class of objective functions, including those of the form  $|\theta|^4$. 
\end{remark}

\section{Discrete schemes vs continuous-time-limits} 
The analysis of scheme (\ref{sgdem}) involve studying the limiting ODEs obtained by letting the step size $\eta$ approach $0$. Notably, distinct ODEs may arise based on how the hyper-parameters are scaled. In this context, we will derive an ODE system that  preserves certain momentum effects.

\subsection{High resolution ODEs.}\label{sec4.1} 
For any $k\geq 0$, let $t_k=k\eta$, and assume $v_k=v(t_k), r_k=r(t_k), \theta_k=\theta(t_k)$ for some sufficiently smooth curve $(v(t),r(t),\theta(t))$. Performing the Taylor expansion on $v_{k-1}$ in powers of $\eta$, we get: 
\begin{equation*}%\label{vk}
v_{k-1} = v(t_{k-1}) = v(t_k)-\eta \dot v(t_k)+O(\eta^2).   
\end{equation*}
Substituting $v_{k-1}$ into (\ref{sgdem}a), we have
$$
v(t_k) = \beta(v(t_k)-\eta \dot v(t_k)+O(\eta^2))+(1-\beta)\nabla F(\theta(t_k)),
$$
which gives
\begin{equation}\label{vode1}
 \frac{\beta \eta}{1-\beta} \dot v = - v + \nabla F(\theta)+O(\eta^2), \quad \text{at}\; t=t_k.  
\end{equation}
Plugging $r_{k+1}=r(t_k)+\eta\dot r(t_k) + O(\eta^2)$ into   (\ref{sgdem}b),  we have
\begin{equation}\label{rode1}
\dot r=-\frac{2r|v|^2}{1+2\eta|v|^2}+O(\eta^2), \quad \text{at}\; t=t_k.  
\end{equation}
Similarly, from  (\ref{sgdem}c) we have
\begin{equation}\label{thetaode1}
\dot \theta = -\frac{2rv}{1+2\eta|v|^2}+O(\eta^2), \quad \text{at}\; t=t_k. 
\end{equation}
We first discard $O(\eta^2)$ and keep $O(\eta)$ terms in (\ref{vode1}) (\ref{rode1}), (\ref{thetaode1}), which leads to the following ODE system: 
\begin{subequations}\label{hrode}
\begin{align}
\epsilon \dot v &= - v + \nabla F(\theta),\\
\dot r &=-\frac{2r|v|^2}{1+2\eta|v|^2},\\
\dot \theta &= -\frac{2rv}{1+2\eta|v|^2},
\end{align}    
\end{subequations}
where $\epsilon=\frac{\beta \eta}{1-\beta}$. This can be viewed as a high-resolution ODE \cite{SD21} because of the presence of $O(\eta)$ terms. Let $\eta\to0$, then (\ref{hrode}) reduces to the following ODE system: 
\begin{subequations}\label{aegdode}
\begin{align}
\dot r &= -2r|\nabla F(\theta)|^2,\\
\dot \theta &= -2r \nabla F(\theta).
\end{align}
\end{subequations}
From two equations in (\ref{aegdode}) together with 
$r(0)=F(\theta_0)$ we can show 
$$
r(t)=F(\theta(t)),\quad\forall t>0,
$$
with which, (\ref{aegdode}b) reduces to 
\begin{equation}\label{gf}
\dot\theta = -2F(\theta)\nabla F(\theta) = -\nabla f(\theta).   
\end{equation}
This asserts that the ODE system (\ref{aegdode}) is equivalent to the gradient flow (\ref{gf}).

To explore the momentum effect, we maintain  $\epsilon=\frac{\beta \eta}{1-\beta}$ unchanged while allowing $\eta\to0$.  
This yields the following system of ODEs: 

\begin{subequations}\label{sgdemode}
\begin{align}
\epsilon \dot v &= - v + \nabla F(\theta),\\
\dot r &= -2r|v|^2,\\
\dot \theta &= -2rv.
\end{align}
\end{subequations}
Compared with (\ref{sgdemode}), the high-resolution ODEs (\ref{hrode}) serve as more accurate continuous-time counterparts for the corresponding
discrete algorithm (\ref{sgdem}). While the analysis of (\ref{hrode}) would be more intricate, in the remainder of this work, we concentrate on (\ref{sgdemode}) and provide a detailed analysis of it.

\vskip2mm
\begin{theorem}
Denote $U_k = (\theta_k, v_k, r_k)$ as the solution provided in Theorem \ref{ubdd}, and $U(t)=(\theta(t), v(t), r(t))$ as the solution to (\ref{sgdemode}). Then,  for any $T>0$, we have: 
$$
\lim_{\substack{\eta\to0\\ k\eta=T}} U_k =U(T).
$$
\end{theorem}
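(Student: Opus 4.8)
The plan is to treat this as a standard consistency-plus-stability (Lax-equivalence) argument for a one-step scheme, with the twist that the momentum parameter satisfies $\beta = \epsilon/(\epsilon+\eta)\to 1$ as $\eta\to 0$ while $\epsilon$ stays fixed. First I would collect uniform a priori bounds: by Theorem \ref{ubdd} the discrete iterates $(\theta_k,v_k,r_k)$ lie in a fixed compact set $K$ for all $\eta<\eta^*$, and the solution $U(t)$ of (\ref{sgdemode}) exists, is $C^2$, and stays bounded on $[0,T]$. Enlarging $K$ to contain the ODE trajectory, I obtain a single constant $L$ bounding $\nabla F$, its Lipschitz constant on $K$, and the $C^2$-norm of $U$ on $[0,T]$. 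The key algebraic observation is that the momentum update in (\ref{sgdem}) is exactly the backward-difference relaxation $\epsilon(v_k-v_{k-1})/\eta = -v_k+\nabla F(\theta_k)$, equivalently $v_k=\beta v_{k-1}+(1-\beta)\nabla F(\theta_k)$ with $1-\beta=\eta/(\epsilon+\eta)$; this is precisely what makes (\ref{sgdem}) a genuine discretization of (\ref{sgdemode}a).

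For consistency, I would substitute the sampled exact solution $\bar U_k:=U(t_k)$ into the three update rules and measure the residuals. The Taylor expansions already recorded in (\ref{vode1}), (\ref{rode1}), (\ref{thetaode1}) show that $\bar U_k$ satisfies the scheme up to errors that, once written in one-step (update) form, are $O(\eta^2)$ uniformly on $[0,T]$. The only point needing care is the $v$-equation: its difference-quotient residual is $\epsilon\cdot O(\eta)$, but multiplying through by $1-\beta=O(\eta)$ to reach the update form $\bar v_k=\beta\bar v_{k-1}+(1-\beta)\nabla F(\bar\theta_k)+\rho^v_k$ yields $\rho^v_k=O(\eta^2)$ as well. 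The initial mismatch is likewise harmless: $v_0=(1-\beta)\nabla F(\theta_0)=O(\eta)$ against $v(0)=0$.

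For stability I would set $e^\bullet_k:=\bullet_k-\bar\bullet_k$ for $\bullet\in\{\theta,r,v\}$, subtract the exact-solution relations from the scheme, and use the Lipschitz bounds on $K$ to obtain a coupled system of the schematic form
\begin{align*}
|e^\theta_{k+1}| &\le |e^\theta_k| + C\eta\big(|e^r_{k+1}| + |e^v_k|\big) + C\eta^2,\\
|e^r_{k+1}| &\le (1+C\eta)|e^r_k| + C\eta|e^v_k| + C\eta^2,\\
|e^v_k| &\le \beta|e^v_{k-1}| + (1-\beta)L|e^\theta_k| + C\eta^2.
\end{align*}
The $\theta$- and $r$-errors obey ordinary $(1+C\eta)$-amplification. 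The delicate piece is the $v$-error, whose multiplier $\beta$ tends to $1$; unrolling it gives $|e^v_k|\le \beta^k|e^v_0|+\sum_{j=0}^{k}\beta^{k-j}\big((1-\beta)L|e^\theta_j|+C\eta^2\big)$, and since $\sum_{j=0}^{k}\beta^{k-j}\le (1-\beta)^{-1}=(\epsilon+\eta)/\eta$, the geometric sums are uniformly controlled: the coupling term is bounded by $L\max_{j\le k}|e^\theta_j|$ and the residual term by $C\eta^2(1-\beta)^{-1}=O(\eta)$, while $\beta^k|e^v_0|=O(\eta)$. Substituting this back closes a discrete Grönwall inequality in $\max_{j\le k}(|e^\theta_j|+|e^r_j|)$, yielding $\max_{k\eta\le T}|e_k|\le C(T)\,\eta\to 0$, which is the claim.

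I expect the main obstacle to be exactly this $\beta\to 1$ behavior in the $v$-recursion: a naive bound would lose a factor $(1-\beta)^{-1}\sim \eta^{-1}$ and destroy the estimate. The resolution is that every coupling and residual feeding the $v$-error already carries a compensating factor $(1-\beta)=O(\eta)$ (respectively the per-step residual is $O(\eta^2)$), so the $\beta$-weighted geometric sums collapse to $O(1)$ and $O(\eta)$ — reflecting that, with $\epsilon$ fixed, the fast variable $v$ relaxes on the $O(1)$ time scale $\epsilon$ and is therefore well resolved by the $O(\eta)$ grid (indeed $\beta^{T/\eta}\to e^{-T/\epsilon}$). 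A secondary technical point is keeping all constants uniform in $\eta$, which is precisely what the compact confinement from Theorem \ref{ubdd} together with the $C^2$-boundedness of $U$ on $[0,T]$ provide.
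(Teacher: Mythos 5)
Your proposal is correct and is essentially the paper's approach: the paper's proof consists only of invoking ``a standard ODE analysis'' made rigorous by the uniform boundedness of $U_k$ (Theorem \ref{ubdd}) and of $U(t)$ (Theorem \ref{exist}), which are exactly the two ingredients on which your consistency--stability--Gr\"onwall argument rests. Your write-up simply supplies the details the paper leaves implicit, including the one genuinely delicate point---that the $(1-\beta)^{-1}$ geometric sums in the $v$-recursion are compensated by the $(1-\beta)$ coupling factor and the $O(\eta^2)$ residuals---which you handle correctly.
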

\vskip2mm
This result is a consequence of a standard ODE analysis. In the present context, it is assured by the boundedness of $U_k$,  as demonstrated in Theorem \ref{ubdd}, 
and the forthcoming demonstration of the boundedness of $U(t)$ that will be shown in Theorem \ref{exist}.

\subsection{Time discretization.} Certainly,  the discrete AEGM dynamics can be interpreted as a discretization of the continuous ODE systems (\ref{sgdemode}). Specifically,  we employ a first order  finite difference approximation with implicit considerations in $r$ and $\theta$, but explicit treatment in $v$, resulting in: 
\begin{align*}
\epsilon \frac{v_{k+1} -v_k}{\tau} &= - v_k + \nabla F(\theta_{k+1}),\\
\frac{r_{k+1} -r_k}{\tau} &= -2r_{k+1}|v_k|^2,\\
\frac{\theta_{k+1} -\theta_k}{\tau} &= -2r_{k+1}v_k.
\end{align*}
This formulation reproduces the discrete AEGM dynamic (\ref{sgdemode}) when setting  $\beta=\tau/\epsilon$ and $\eta=\tau$. 
Moreover, this perspective enables the derivation of new algorithms by applying diverse time discretizations to (\ref{sgdemode}).  For instance, a second order discretizaiton of (\ref{sgdemode}) leads to 
\begin{align*}
\epsilon \frac{3v_{k+1} -4v_k+v_{k-1}}{2\tau} &= - v_{k+1} + \nabla F(2\theta_{k}-\theta_{k-1}),\\
\frac{3r_{k+1} -4r_k +r_{k-1}}{2\tau} &= -2r_{k+1}|2v_k-v_{k-1}|^2,\\
\frac{3\theta_{k+1} - 4\theta_k +\theta_{k-1}}{2\tau} &= -2r_{k+1}v_{k+1}.
\end{align*}
By setting  $\beta=\tau/\epsilon$ and $\eta=\tau$ the following recursive relationships are derived: 
\begin{align*}
v_{k+1} & = \frac{1}{3+2\beta} \left(4v_k- v_{k-1} + 2\beta \nabla F(2\theta_{k}-\theta_{k-1})\right),\\
r_{k+1} & = \frac{4r_k -r_{k-1}}{ 3+4\eta |2v_k-v_{k-1}|^2}, \\ 
\theta_{k+1} & =\frac{1}{3} \left(4\theta_k- \theta_{k-1} -4\eta r_{k+1}v_{k+1} \right).
\end{align*} 
These recursions begin with the initial setup of  $v_{-1}=0, r_{-1}=r_0$ and $\theta_{-1}=\theta_0$.
A more comprehensive analysis of this scheme exceeds the scope of the present article.

\section{Dynamic solution behavior}
This section is dedicated to the examination of (\ref{sgdemode}), including aspects such as global existence, asymptotic convergence to equilibria through the renowned LaSalle invariance principle \cite{L60}, and the determination of convergence rates for non-convex objective functions that adhere to the PL property. 

\subsection{Global Existence and Uniqueness.}

\vskip2mm
\begin{theorem}\label{exist}
Under Assumption \ref{asp}, the ODE (\ref{sgdemode}) with the  initial condition $(\theta_0, r_0>0,v_0)$ admits  a unique global solution $(\theta(t), v(t), r(t))\in C^1([0,\infty];\R^{2n+1})$. Specifically, for any $t>0$, the following conditions hold: 
\begin{align}\label{bb}
&\theta(t)\in\{\theta\in\R^n\;|\;F(\theta)\leq F(\theta_0)+\epsilon r_0|v_0|^2\},\notag\\[5pt]
&|v(t)|\leq \max\{|v(0)|,\max_{[0, t]}|\nabla F(\theta(\cdot))|\},\notag\\
& 0< r(t)\leq r_0.
\end{align}    
\end{theorem}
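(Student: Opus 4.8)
The plan is to follow the classical Cauchy--Lipschitz route: first obtain a unique maximal local solution, then derive a priori bounds that preclude finite-time blow-up, so that the maximal solution is in fact global and satisfies (\ref{bb}). I would write (\ref{sgdemode}) as an autonomous system $\dot U = G(U)$ with $U=(v,r,\theta)$ and $G(v,r,\theta)=\big(\tfrac{1}{\epsilon}(-v+\nabla F(\theta)),\,-2r|v|^2,\,-2rv\big)$. Since $f\in C^2(\R^n)$ and $F=\sqrt{f+c}\geq F^*>0$, we have $F\in C^2$ and hence $\nabla F\in C^1$, so $G$ is locally Lipschitz on $\R^{2n+1}$. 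Picard--Lindel\"of then gives a unique maximal solution on some $[0,T_{\max})$, and continuity of $G$ makes it $C^1$; the whole task reduces to showing $T_{\max}=\infty$ together with the stated bounds.

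The crux is a continuous Lyapunov functional mirroring the discrete quantity $Q_k$ of (\ref{Qk}). Define $Q(t):=F(\theta(t))+\epsilon r(t)|v(t)|^2$. Differentiating along the flow and substituting the three equations of (\ref{sgdemode}), using that $\epsilon\langle v,\dot v\rangle=-|v|^2+\langle v,\nabla F(\theta)\rangle$ from (\ref{sgdemode}a), I would obtain
\[
\dot Q = -2r\langle\nabla F(\theta),v\rangle - 2\epsilon r|v|^4 - 2r|v|^2 + 2r\langle v,\nabla F(\theta)\rangle = -2r|v|^2\big(1+\epsilon|v|^2\big)\le 0,
\]
the two inner-product terms cancelling exactly. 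Hence $Q$ is non-increasing, so $F(\theta(t))\le Q(t)\le Q(0)=F(\theta_0)+\epsilon r_0|v_0|^2$ (using $r,|v|^2\ge 0$), which is precisely the first bound in (\ref{bb}). Coercivity of $F$ then forces $\theta(t)$ to remain in the fixed compact sublevel set $K:=\{F\le F(\theta_0)+\epsilon r_0|v_0|^2\}$ on all of $[0,T_{\max})$.

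For the remaining bounds I would argue directly. Integrating (\ref{sgdemode}b) gives $r(t)=r_0\exp\!\big(-2\int_0^t|v(s)|^2\,ds\big)$, so $0<r(t)\le r_0$ whenever the integral is finite, using $r_0>0$. For $v$ I would use an invariance argument on $|v|^2$: fixing $T<T_{\max}$ and setting $B:=\max\{|v_0|,\max_{[0,T]}|\nabla F(\theta)|\}$, the identity $\tfrac{d}{dt}|v|^2=\tfrac{2}{\epsilon}\big(-|v|^2+\langle v,\nabla F(\theta)\rangle\big)\le \tfrac{2}{\epsilon}|v|\,(B-|v|)$ shows that $|v|^2$ strictly decreases wherever $|v|>B$; since $|v(0)|\le B$, a first-exit-time contradiction yields $|v(t)|\le B$ on $[0,T]$, i.e. the second bound of (\ref{bb}).

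Finally, to globalize: on $[0,T_{\max})$ the component $\theta$ lives in the compact set $K$, so $|\nabla F(\theta)|\le M:=\max_{\theta\in K}|\nabla F(\theta)|<\infty$; combined with the previous step this gives $|v|\le\max\{|v_0|,M\}$ and $0<r\le r_0$, so $U(t)$ stays in a fixed bounded set. The blow-up alternative for maximal solutions then forces $T_{\max}=\infty$, and (\ref{bb}) holds on all of $[0,\infty)$. The main obstacle, and the only genuinely non-routine point, is discovering the correct Lyapunov functional $Q$ and verifying the exact cancellation that makes $\dot Q\le 0$; once $Q$ is available, coercivity immediately confines $\theta$, and the $r$- and $v$-bounds follow from elementary integration and comparison/invariance arguments.
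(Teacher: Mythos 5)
Your proposal is correct and follows essentially the same route as the paper: Picard--Lindel\"of plus the extension (blow-up) alternative, the same Lyapunov function $Q=F(\theta)+\epsilon r|v|^2$ with $\dot Q=-2r|v|^2(1+\epsilon|v|^2)\le 0$, and coercivity of $F$ to confine $\theta$. The only differences are cosmetic: you bound $|v|$ by a differential-inequality/first-exit-time argument where the paper uses the explicit Duhamel representation $v(t)=v(0)e^{-t/\epsilon}+\tfrac{1}{\epsilon}\int_0^t e^{(\tau-t)/\epsilon}\nabla F(\theta(\tau))\,d\tau$, and your explicit formula $r(t)=r_0\exp\bigl(-2\int_0^t|v|^2\,ds\bigr)$ actually justifies the strict positivity $r(t)>0$ more directly than the paper's monotonicity remark.
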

% \vskip2mm

\begin{proof} For $f\in C^1(\R^n)$,  by Picard and Lindel\"{o}f, there exists a unique local solution to (\ref{sgdemode}) with the initial condition $(\theta_0, r_0,v_0)$. The extension theorem implies that the global existence fails only if the solution `escapes’ to infinity. To establish global existence, it is sufficient to demonstrate that $(\theta(t), r(t), v(t))$ remains bounded for all $t>0$. 
%such that the local solution can be extended to a global solution. 
To achieve this, we introduce a suitable control function. A candidate based on the discrete counterpart is:  
\begin{equation}\label{Lya}
Q:= Q(\theta, r, v) =   F(\theta) +  \epsilon r |v|^2.
\end{equation}
Using the chain rule,  if $(\theta, r, v)$ satisfies (\ref{sgdemode}), then:
%By direct computation of its derivative, we obtain
\begin{align}\label{dQ}
\dot Q &=\langle \nabla F(\theta),\dot\theta\rangle +  \epsilon\dot r |v|^2 + 2\epsilon r\langle v, \dot v\rangle\notag\\
&= -2 r\langle \nabla F(\theta),v\rangle - 2\epsilon r |v|^4  + 2 r\langle v, - v +  \nabla F(\theta)\rangle\notag\\
&= -2r |v|^2-2\epsilon r|v|^4\leq 0.
\end{align}    
Hence, $Q$ is guaranteed to decrease. In particular, we have:
\begin{equation}\label{FQ}
F(\theta) +  \epsilon r |v|^2\leq Q(\theta_0, r_0, v_0),    
\end{equation}
which implies the boundedness of $F(\theta)$. This, combined with the coerciveness of $f$ and, consequently $F$, guarantees that $\theta(t)$ remains bounded for all $t>0$. However, the above inequality does not provide a uniform bound for $v(t)$. 

To establish a useful bound for $v(t)$, we use (\ref{sgdemode}a): 
$$
\partial_t v + \frac{1}{\epsilon} v = \frac{1}{\epsilon} \nabla F(\theta(t)),
$$
resulting in: 
$$
\frac{d}{dt}(v(t) e^{t/\epsilon}) = \frac{1}{\epsilon}e^{t/\epsilon}\nabla F(\theta(t)).
$$
Consequently, for every $t$, we have: 
$$
v(t) = v(0)e^{-t/\epsilon} + \frac{1}{\epsilon} \int_0^t e^{(\tau-t)/\epsilon}\nabla F(\theta(\tau))d\tau.
$$
Therefore,  
\begin{align*}
|v(t)| &\leq |v(0)|e^{-t/\epsilon} + (1-e^{-t/\epsilon}) \max_{\tau\in[0,t]}|\nabla F(\theta(\tau))|\\
&\leq \max\{|v(0)|, \max_{\tau\in[0,t]}
|\nabla F(\theta(\tau))|
\}.
\end{align*}
Note that the boundedness of $\nabla F(\theta(t))$ is ensured by 
$$
|\nabla F(\theta(t))|=\bigg|\frac{\nabla f(\theta(t))}{2F(\theta(t))}\bigg|\leq \bigg|\frac{\nabla f(\theta(t))}{2F^*}\bigg|.
$$
From (\ref{sgdemode}), it is evident that $r$ decreases monotonically, thus $0\leq r(t)\leq r_0$. This concludes the proof. 
\end{proof}
\vskip2mm

\begin{remark}
The uniform boundedness of $|v|$ justifies the removal of  $2\eta|v|^2$ in (\ref{hrode}b) and (\ref{hrode}c),  while taking the limit  $\eta\to 0$.
\end{remark}

%\tr{add a remark to justify the derivation of the model ODE from the scheme} 

\subsection{Asymptotic behavior of solutions.}
\vskip2mm
\begin{theorem} \label{lmt}
Under Assumption \ref{asp}, let $(\theta(t),r(t),v(t))$ be the global solution to the ODE (\ref{sgdemode}) as stated in Theorem \ref{exist},  with the initial condition $(\theta_0,r_0, v_0)$. %=(\theta_0, F(\theta_0), v_0)$. 
If the conditions 
\begin{equation}\label{cond1}
r_0> F(\theta_0)-F^*,  \quad \epsilon |v_0|^2 < 1-  \frac{F(\theta_0)-F^*}{r_0}   
\end{equation}
hold, then as $t\to \infty$, 
$$
\nabla F(\theta(t))\rightarrow 0,\quad
r(t)\rightarrow r^*>0, \quad v(t)\rightarrow v^*=0.
$$
%where $\nabla f(\theta^*)=0$ and $f(\theta^*)$ is a local minimum. 
Furthermore,   $F(\theta(t))\rightarrow F^*$ as the minimum value of $F$, and   
\begin{equation}\label{Fubb}
r^*< r(t) \leq r_0, \quad F^*\leq F(\theta(t))\leq 
F(\theta_0)+\epsilon r_0 |v_0|^2.    
\end{equation}
\end{theorem}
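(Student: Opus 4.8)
The plan is to treat (\ref{sgdemode}) as an autonomous dynamical system and combine the monotonicity of the control function $Q$ from Theorem~\ref{exist} with the LaSalle invariance principle \cite{L60}, after first extracting a \emph{strictly positive} lower bound for $r(t)$ --- this last point being where the hypotheses (\ref{cond1}) enter. Recall from (\ref{dQ}) that along any trajectory $\dot Q = -2r|v|^2 - 2\epsilon r|v|^4 \le 0$, so $Q=F(\theta)+\epsilon r|v|^2$ is non-increasing and, being bounded below by $F^*>0$, converges to some $Q^*\ge F^*$; integrating $-\dot Q$ also gives $\int_0^\infty r|v|^2\,dt<\infty$. Together with the boundedness of $(\theta,r,v)$ from Theorem~\ref{exist}, the forward orbit is precompact, which is exactly the setting LaSalle requires.

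The crucial first step is the positive lower bound on $r$. Using $\dot r=-2r|v|^2$ I would rewrite $\dot Q = \dot r - 2\epsilon r|v|^4$, so that
\begin{equation*}
\frac{d}{dt}\bigl(Q-r\bigr) = -2\epsilon r|v|^4 \le 0,
\end{equation*}
i.e. $Q-r$ is itself non-increasing. Hence $F(\theta(t))+\epsilon r(t)|v(t)|^2 - r(t)\le Q(\theta_0,r_0,v_0)-r_0$, and discarding the non-negative terms $F(\theta)-F^*$ and $\epsilon r|v|^2$ yields
\begin{equation*}
r(t)\ \ge\ r_0-\bigl(F(\theta_0)-F^*\bigr)-\epsilon r_0|v_0|^2\ =:\ \underline r .
\end{equation*}
The two conditions in (\ref{cond1}) are precisely what make $\underline r>0$: the second rearranges to $\epsilon r_0|v_0|^2< r_0-(F(\theta_0)-F^*)$, while the first guarantees its right-hand side is positive. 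Since $r$ is monotone decreasing (by $\dot r\le 0$) and bounded below by $\underline r>0$, it follows that $r(t)\to r^*\ge\underline r>0$.

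With $r(t)\ge\underline r>0$ secured, I would apply LaSalle on the compact positively invariant set carved out by the bounds of Theorem~\ref{exist} intersected with $\{r\ge\underline r\}$. There $\dot Q=0$ forces $r|v|^2=0$, hence $v=0$; and the largest invariant subset of $\{v=0\}$ must also have $\dot v=0$, so by (\ref{sgdemode}a) $\nabla F(\theta)=0$. LaSalle then gives $v(t)\to 0$ and $\nabla F(\theta(t))\to 0$, whence $\nabla f(\theta(t))=2F(\theta)\nabla F(\theta)\to 0$. The remaining claims follow by bookkeeping: since $\epsilon r|v|^2\to 0$ we get $F(\theta(t))=Q(t)-\epsilon r|v|^2\to Q^*$, a critical value of $F$ with $Q^*\ge F^*$, and the chain $F^*\le F(\theta(t))\le Q(t)\le Q(\theta_0,r_0,v_0)=F(\theta_0)+\epsilon r_0|v_0|^2$ gives the stated two-sided bound; the strict inequality $r^*<r(t)$ holds because $\dot r=-2r|v|^2<0$ wherever $v\ne 0$ (the degenerate case $v\equiv 0$ on a forward interval is an equilibrium and trivial).

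The main obstacle is exactly the positive lower bound for $r$: bare LaSalle would only place the $\omega$-limit set in $\{v=0,\ \nabla F=0\}\cup\{r=0\}$, and the spurious branch $r\to 0$ (which would render the conclusion vacuous, since $\dot\theta=-2rv$ then stalls) can only be excluded quantitatively. The auxiliary monotone quantity $Q-r$, obtained by exploiting the exact identity $-2r|v|^2=\dot r$, is the device that converts the qualitative energy decay into the needed quantitative estimate, and it is the one genuinely non-routine ingredient of the argument.
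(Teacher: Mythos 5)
Your first two steps are correct and essentially reproduce the paper's own argument: your monotone quantity $Q-r$ is exactly the paper's identity $\dot r = \dot Q + 2\epsilon r|v|^4$ integrated in time (the paper's Step 1), yielding the same lower bound $r(t)\ge r_0-(F(\theta_0)-F^*)-\epsilon r_0|v_0|^2>0$ under (\ref{cond1}); and your LaSalle step (precompact forward orbit, $\dot Q=0\Rightarrow v=0$, invariance forcing $\nabla F(\theta)=0$ via $\epsilon\dot v=\nabla F(\theta)$) is the paper's Step 2 --- if anything phrased more carefully, since the paper's set $\Omega=\{Q\le Q(\theta_0,r_0,v_0)\}$ is not actually compact ($r$ is unbounded on $\{v=0\}$), so arguing through precompactness of the orbit, as you do, is the cleaner formulation.

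However, there is a genuine gap at the end. The theorem asserts that $F(\theta(t))\to F^*$, \emph{a minimum value of $F$}, whereas you conclude only that the limit $Q^*$ is ``a critical value of $F$ with $Q^*\ge F^*$.'' These are not the same: a priori the trajectory could converge to a saddle or local-maximum critical level of $F$, and nothing in your argument excludes this, so one of the theorem's stated conclusions is left unproven. The paper devotes its entire Step 3 to precisely this point, arguing by contradiction with the monotone function $Q$ that you already have in hand: if the limit $b$ of $F(\theta(t))$ were a local maximum value, then using $F(\theta(t))\to b$, $r\to r^*$, $|v|\to 0$ one finds a time $t_2$ with $F(\theta(t_2))\le b-\tfrac{\delta}{2}$, and then monotonicity gives, for all $t\ge t_2$,
\begin{equation*}
Q(t)\le Q(t_2)= F(\theta(t_2))+\epsilon r(t_2)|v(t_2)|^2 \le b-\tfrac{\delta}{2}+\epsilon\,(r^*+\delta)\,\delta^2 < b
\end{equation*}
for $\delta$ small enough that $(r^*+\delta)\delta<\tfrac{1}{2\epsilon}$; letting $t\to\infty$ yields $b=\lim_{t\to\infty}Q(t)<b$, a contradiction. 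Some argument of this kind (the decay of $Q$ is the key tool, so you are one estimate away) must be added to upgrade ``critical value'' to ``minimum value''; as written, your proposal proves a strictly weaker statement than the theorem.
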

\vskip2mm

The upper bound on $F$ follows directly from (\ref{FQ}). %In particular, if $v_0=0$, then $F(\theta(t))\leq F(\theta_0)$.
We establish the convergence result through three steps.

\vskip2mm
Step 1: We demonstrate that $r^*:=\lim_{t\to\infty}r(t)$ exists and $r^*>0$.  First, since $\dot r\leq0$ and $r\geq0$, it follows that  $\int_0^\infty |\dot r|dt \leq r_0<\infty$. This ensures the existence of   $\lim_{t\to\infty}r(t)=r^*$.  
Note that (\ref{dQ}) can be expressed as 
$$
\dot r = \dot Q + 2\epsilon r|v|^4.
$$
Integrating both sides from $0$ to $t$ and using (\ref{cond1}), we obtain: 
\begin{align}\label{rF}
r(t)
&=r_0 + F(\theta(t))+ \epsilon r(t)|v(t)|^2 - (F(\theta_0)+\epsilon r_0|v_0|^2) +2\epsilon\int_{0}^{t}r|v|^4 ds \notag\\
& \geq  r_0 - (F(\theta_0)-F^*) -\epsilon r_0|v_0|^2
 >0. 
\end{align}    
From this lower bound, we deduce: 
$$
r^*=\lim_{t\to\infty}r(t)\geq r_0 + F^* -F(\theta_0) -\epsilon r_0|v_0|^2 >0.
$$

Step 2: We proceed to analyze the asymptotic behavior of $\theta$ and $v$ by using LaSalle's invariance principle \cite{L60}. Recall the function $Q$ defined in Theorem \ref{exist}, 
$$
 Q(\theta, r, v) =   F(\theta)  +  \epsilon r |v|^2.
$$
Given that $r(t)\geq r^*>0$, this implies that the set 
$$
\Omega =\{(\theta, r, v)\;|\; Q(\theta, r, v)\leq Q(\theta_0, r_0, v_0)\}
$$
is a compact positively invariant set with respect to (\ref{sgdemode}). For any $(\theta,r,v)\in\Omega$, we observe that 
$$
\dot Q(\theta, r,v)=-2r|v|^2-2\epsilon r|v|^4\leq0.
$$ 
Define the set: 
$$
\Sigma=\{(\theta, r, v)\in \Omega\;|\; \dot Q(\theta, r, v)=0\}. 
$$
Since $r(t)\geq r^*>0$ for all $t>0$, it follows that 
$$
\Sigma=\{(\theta, r, v)\in \Omega\;|\; r|v|=0\}=\{(\theta, r, v)\in \Omega\;|\; v=0, r\geq r^*\}.
$$
Next, we identify the largest invariant set in $\Sigma$. Suppose that at some $t_1$, $v(t_1)=0$, then we have $\dot \theta=0, \dot r=0$, but $\epsilon \dot v(t_1)=\nabla F(\theta(t_1))$. As a result, the trajectory will not stay in $\Sigma$ unless $\nabla F(\theta(t_1))=0$. In other words, the largest invariant set in $\Sigma$ must be: 
$$
\Sigma_0=\{(\theta,r,v)\in\Omega\;|\; v=0, \nabla F(\theta)=0, r\geq r^*\}.
$$
By LaSalle's invariance principle, every trajectory of (\ref{sgdemode}) starting in $\Omega$ tends to $\Sigma_0$ as $t\to\infty$. From Step 1, $r^*$ is the only limit of $r(t)$. Hence,  
%Assume that $\nabla f(\theta)=0$ contains only finite number of critical points of $f$, 
 all trajectories will admit a limit or a cluster point in 
$$
\{(\theta, r^*, 0)\;|\; \nabla F(\theta)=0\}. 
$$
This implies that:  $\lim_{t\to \infty}\nabla F(\theta(t))=0$. 
%since $\nabla f(\theta)=2F(\theta)\nabla F(\theta)$ and $\nabla F(\theta)=0$. 
Note that $Q$ is monotone and bounded, hence admitting a limit $b>0$. This further implies that: 
$$
\lim_{t\to \infty} F(\theta(t))=\lim_{t\to \infty} [Q(\theta(t), r(t), v(t))-\epsilon r(t)|v(t)|^2]=b\geq \inf F. 
$$

Step 3.
Finally, we demonstrate that  $b$ is a local minimum value of $F$, rather than a local maximum value. %It suffices to show that $F^*:=\sqrt{f^*+c}$ is a local minimum value of $F$. 
We prove this by contradiction. Suppose $b$ is a local maximum value of $F$. Recall that $F(\theta)\to b$, $r\to r^*$ and $|v|\to 0$ as $t\to\infty$. Thus,  for any  $\delta>0$, there exists $t_1=t_1(\delta)$ such that for any $t\geq t_1$: 
$$
|F(\theta(t))- b |\leq \delta,\quad |r(t)-r^*|\leq\delta,\quad |v(t)|\leq \delta.
$$ 
Since $F$ is continuous, there exists $t_2\geq t_1$ such that: 
$$
F(\theta(t_2))\leq  b-\frac{\delta}{2}, 
$$
where we have used the assumption that $b$ is a local maximum value of $F$.  From Step 2, we know that $Q$ is non-increasing in $t$. Hence,  for any $t\geq t_2$, we have: 
$$
Q(t)\leq Q(t_2)= F(\theta(t_2))  +  \epsilon r(t_2) |v(t_2)|^2
\leq b-\frac{\delta}{2}+\epsilon (r^*+\delta)\delta^2< b, 
$$
provided $\delta$ is small enough so that  $(r^*+\delta)\delta <\frac{1}{2\epsilon}$. 
Now letting $t\to\infty$,
we obtain  $b=\lim_{t\to\infty}Q(t)<b$. This is a contradiction. Hence,  $b$ has to be a local minimum value of $F$, i.e., $b=F^*$.

\section{Convergence rates}\label{rate}
The speed at which the solution converges to the minimum value of $f$ depends on the geometry of the objective function near $f^*$. In fact, when the PL condition is satisfied, it can be shown that $f(\theta(t))$ converge  exponentially fast to $f^*$. 

\vskip2mm
\begin{theorem}\label{cvg}
Consider the global solution $(\theta(t),r(t),v(t))$ to (\ref{sgdemode}) as presented  in Theorem \ref{lmt}, initialized with $(\theta_0,r_0, v_0)=(\theta_0,F(\theta_0), 0)$. Assume that $f$ also satisfies the PL condition (\ref{PL}) with $\mu> 0$. %and denote $\kappa=\frac{L}{\mu}$, 
For any $\delta\in(0,1)$, the following bounds are valid: 
\begin{equation}\label{cvgen}
f(\theta(t))-f^*\leq (f(\theta_0)-f^*)\bigg( e^{-\frac{2\mu r^*}{F(\theta_0)(2-\delta)} t}+\frac{2 \mu\epsilon}{\delta(2-\delta)}e^{-\frac{\delta}{\epsilon} t}\bigg),     
\end{equation}
given that $\epsilon\leq\min\{\epsilon_1,\epsilon_2\}$, where
\begin{equation}\label{eps}
\epsilon_1=\delta(1-\delta)\frac{F^*}{2LF(\theta_0)}, \quad 
\epsilon_2=\frac{1}{2\mu F(\theta_0)}(2-\delta)(1-\delta)\bigg(F(\theta_0)+\frac{F^*r^*}{F(\theta_0)}\bigg).
\end{equation}
Here,  $L$ represents the largest eigenvalue of $D^2f(\theta)$, where $\theta$ belongs to the solution domain in Theorem \ref{exist}.
\end{theorem}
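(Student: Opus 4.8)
The plan is to monitor the objective gap $h(t):=f(\theta(t))-f^*$ directly and to reduce the analysis to a scalar differential inequality that can be compared against an explicit \emph{control function}. Using $\nabla f=2F\nabla F$ together with $\dot\theta=-2rv$, one computes
\[
\dot h=\langle\nabla f(\theta),\dot\theta\rangle=-4rF\langle\nabla F(\theta),v\rangle .
\]
The natural decomposition $v=\nabla F(\theta)+(v-\nabla F(\theta))$ then splits this into a genuinely dissipative part $-4rF|\nabla F|^2=-\tfrac{r}{F}|\nabla f|^2$ and a momentum-lag part $-4rF\langle\nabla F,\,v-\nabla F\rangle$. On the dissipative part I would invoke the PL inequality $|\nabla f|^2\ge 2\mu h$ and the a priori bounds $r^*<r\le r_0=F(\theta_0)$ and $F^*\le F\le F(\theta_0)$ from Theorems \ref{exist}–\ref{lmt} (here $v_0=0$ removes the $\epsilon r_0|v_0|^2$ term), so that $-\tfrac{r}{F}|\nabla f|^2\le -\tfrac{2\mu r^*}{F(\theta_0)}h$, the ideal linear rate.

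The second step is to control the momentum lag. Since $v-\nabla F=-\epsilon\dot v$ obeys the fast relaxation $\epsilon\dot v=-(v-\nabla F)$, it decays on the $e^{-t/\epsilon}$ time scale; differentiating it reintroduces curvature through $\tfrac{d}{dt}\nabla F=D^2F\cdot\dot\theta$, and the identity $D^2f=2F\,D^2F+2\nabla F\otimes\nabla F$ lets me bound the resulting term by the largest eigenvalue $L$ of $D^2f$. A Young split of $\langle\nabla F,\,v-\nabla F\rangle$, with a weight tied to $\delta$, trades a controlled fraction of the dissipation against the lag and produces a closed inequality $\dot h\le -\lambda_1 h+(\text{a controlled, exponentially decaying remainder})$, the allocation of $\delta$ being chosen so that $\lambda_1=\tfrac{2\mu r^*}{F(\theta_0)(2-\delta)}$. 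The two smallness thresholds enter precisely here: $\epsilon\le\epsilon_1$ (proportional to $F^*/(LF(\theta_0))$) makes the $L$-curvature contribution subordinate, while $\epsilon\le\epsilon_2$ keeps the remaining lag dominated by the PL decay.

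To extract the stated two-exponential bound I would use the control-function (barrier/comparison) method rather than a naive Gronwall estimate, since the latter only yields a single exponential with an inflated prefactor. I would posit the ansatz
\[
p(t)=(f(\theta_0)-f^*)\Big(e^{-\lambda_1 t}+\tfrac{2\mu\epsilon}{\delta(2-\delta)}e^{-\delta t/\epsilon}\Big),
\]
check that $p(0)\ge h(0)$ (immediate since the second coefficient is positive) and verify that $p$ is a supersolution of the differential inequality above, i.e.\ that $\dot p+\lambda_1 p$ dominates the remainder separately in the two time scales $e^{-\lambda_1 t}$ and $e^{-\delta t/\epsilon}$. A standard comparison argument then gives $h(t)\le p(t)$, which is exactly \eqref{cvgen}.

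The main obstacle is closing the inequality with sharp constants and simultaneously validating the supersolution property in both exponential modes: the lag term couples $h$, the energy dissipation $r|v|^2$, and the curvature, so the single free weight $\delta$ (and the associated Young parameter) must be allocated so that the slow-mode coefficient is exactly $f(\theta_0)-f^*$ and the fast-mode coefficient, its rate $\delta/\epsilon$, and the thresholds $\epsilon_1,\epsilon_2$ are mutually consistent. This is the ``subtle optimization argument'' that selects the optimal control function; it follows the strategy of \cite{AG22} for the Heavy Ball system but is substantially complicated by the nonlinear coupling between $v$, $r$ and $\theta$, and is where essentially all the work lies, whereas the final comparison step is routine.
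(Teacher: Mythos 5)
Your proposal has the right overall architecture (your target inequality $\dot h\le -\lambda_1 h+\text{remainder}$ is exactly the form of the paper's last step, where the paper derives $\dot W+\tfrac{2ar}{\epsilon}W\le \tfrac{2ar W(0)}{\epsilon}e^{-(b_*/\epsilon)t}$ and integrates), but there is a genuine gap in the mechanism that is supposed to produce the exponentially decaying remainder. You assert that the momentum lag $w:=v-\nabla F(\theta)=-\epsilon\dot v$ ``decays on the $e^{-t/\epsilon}$ time scale.'' It does not: differentiating gives
\begin{equation*}
\dot w=-\tfrac{1}{\epsilon}w+2r\,D^2F(\theta)\,v ,
\end{equation*}
so only the initial transient decays at the fast rate; by Duhamel, $|w(t)|\lesssim e^{-t/\epsilon}|\nabla F(\theta_0)|+\epsilon\, L_F r_0\sup_{s\le t}|v(s)|$, i.e.\ $w$ carries a persistent component slaved to the slow dynamics. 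Since $|v|\le |w|+|\nabla F|$ and any useful bound on $|\nabla F|^2$ must go back through $h$ (an upper gradient bound of the type $|\nabla f|^2\lesssim L\,h$), the ``remainder'' $\tfrac{r}{\gamma}|w|^2$ produced by your Young split is \emph{not} an a priori exponentially decaying function of $t$: it is coupled back to $h$ itself. Consequently the scalar supersolution check for your ansatz $p(t)$ cannot be carried out as described; you would need either a genuine two-component (vector) comparison/bootstrap for the coupled pair $(h,|w|^2)$ — which, done naively, degrades the slow rate by $O(\epsilon)$ and will not reproduce the stated constants — or a device that closes the inequality algebraically.

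That device is precisely what the paper constructs and what your sketch is missing: a single control function $E=a(f-f^*)-\epsilon\langle\nabla f,v\rangle+\lambda\epsilon r|v|^2$ whose cross term $-\epsilon\langle\nabla f,v\rangle$ is essential. After differentiating $E$, using the PL inequality on $\langle\nabla f,\nabla F\rangle$ and the curvature bound $\langle \nabla^2 f\, v,v\rangle\le L|v|^2$, the troublesome inner product $\langle\nabla f,v\rangle$ is eliminated by the identity $\langle\nabla f,v\rangle=\tfrac{a}{\epsilon}(f-f^*)+\lambda r|v|^2-\tfrac{1}{\epsilon}E$, yielding $\dot E\le -\tfrac{b(t)}{\epsilon}E$ under the constraints (\ref{sced}); the choices $a=\tfrac{\epsilon\mu}{F(\theta_0)(2-\delta)}$, $\lambda=(1-\delta)\tfrac{F^*}{r_0}$ then force $b_*\ge\delta$ and give the thresholds $\epsilon_1,\epsilon_2$. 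Only \emph{after} $E(t)\le E(0)e^{-(b_*/\epsilon)t}$ is known does one obtain a linear inequality for $W=f-f^*$ with an exponentially decaying source, and the two-exponential bound (\ref{cvgen}) follows by an integrating factor. In short: the hard part you defer to ``where essentially all the work lies'' is not a matter of tuning constants in an otherwise sound scalar argument; without the cross term (or an equivalent coupled Lyapunov construction) the argument is circular and does not close.
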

\vskip2mm

\begin{remark} (Comparison with gradient flow (\ref{gf} without momentum) 
For small values of $\epsilon$, where momentum exerts a diminished influence on the dynamical system, the convergence rate is predominantly governed by the term $e^{-\frac{2\mu r^*}{F(\theta_0)(2-\delta)}t}$. Specifically: 
$$
f(\theta(t))-f^*\leq (f(\theta_0)-f^*)\bigg( e^{-\frac{2\mu r^*}{F(\theta_0)(2-\delta)} t}\bigg),
$$
if $\epsilon\leq\{\epsilon_1, \epsilon_2, \epsilon_3\}$ with $\epsilon_3=\frac{F(\theta_0)\delta(2-\delta)}{2\mu r^*}$. Also $\frac{2\mu r^*}{F(\theta_0)(2-\delta)}$ gets larger as $\delta\to1$. For $\delta=1$, $\epsilon$ is compelled to be $0$, reducing  (\ref{sgdemode}) to (\ref{aegdode}) or (\ref{gf}). %for which $r(t)=F(\theta(t))$ for any $t\geq0$. 
In this scenario, $\frac{2\mu r^*}{F(\theta_0)(2-\delta)}$ becomes $2\mu $, restoring  the convergence rate of gradient flow (\ref{gf}) under the PL condition \cite{P63}.
\end{remark}
\vskip2mm

\begin{remark} (Comparison with gradient flow with momentum %with Heavy-Ball dynamical system 
and the work \cite{AG22}). 
%In practice, $\epsilon$ tends to be relatively large, given that $\epsilon=\frac{\eta\beta}{1-\beta}$,  and a typical choice for $\beta$ is $0.9$. 
To compare with the GD with momentum of form 
\begin{align*}
m_k &= \beta m_{k-1} + (1-\beta) \nabla f(\theta_k), \quad m_{-1}=0, \\
\theta_{k+1} &= \theta_k - \eta m_k,
\end{align*}
we follow the same strategy as in Section \ref{sec4.1} to obtain its continuous formulation: 
$$
 \epsilon \ddot \theta + \dot \theta +\nabla f(\theta)=0. 
$$
%with $\epsilon=\frac{ \beta \eta}{1-\beta}$. 
This upon rescaling $s=t/\sqrt{\epsilon}$ leads to 
$$
\theta'' + \frac{1}{\sqrt{\epsilon}}  \theta' +\nabla f(\theta)=0. 
$$
This is the second-order ODE (\ref{ddtheta}) with a constant $a(t)$, for which we refer to the findings presented in \cite[Theorem 3.1]{AG22}. When  dealing with non-convex functions satisfying the PL condition (\ref{PL}), particularly in scenarios where  $\kappa >9/8$ and $\frac{1}{\sqrt{\epsilon}} =(2\sqrt{\kappa}-\sqrt{\kappa-1})\sqrt{\mu}$,  \cite{AG22} establishes the estimate: 
$$
f(\theta(t)) -f^* \leq O(1) e^{-2(\sqrt{\kappa}-\sqrt{\kappa-1})\sqrt{\mu}s}= O(1) e^{-2\mu b_1t},     
$$
where 
$$
b_1=3\kappa -1- 3\sqrt{\kappa (\kappa-1)}.
$$
We examine a scenario where the dominant factor in the convergence rate  (\ref{cvgen}) is $e^{-\frac{\delta}{\epsilon}t}$, which when taking the same $\epsilon$ as above gives the convergence rate of  $e^{-2\mu b_2 t}$ with 
$$
b_2=\frac{\delta}{2} (5\kappa -1- 4\sqrt{\kappa (\kappa-1)}).
%\min\left\{\frac{1}{1-\delta}\kappa F(\theta_0), \frac{\delta}{(2-\delta)(1-\delta)}\right\}, \quad \kappa=\frac{L}{\mu}, \quad \delta\in(0,1).
$$ 
While our convergence rate is comparable to the specific rate $e^{-2\mu b_1 t}$, it is considered suboptimal owing to the additional constraint  $\epsilon \leq \min \{\epsilon_1, \epsilon_2\}$ imposed by the proof technique. 
%in the  estimation process.    
%equation (\ref{cvgen}) yields a faster convergence rate in some ca
%compared to the HB method outliend in \cite{AG22}.
\end{remark}

The proof outlined below comprises three steps:\\[1mm]
Step 1: We introduce a candidate control function, denoted as 
$$
E= a (f(\theta)-f^*)-\epsilon \langle \nabla f(\theta),v\rangle + \lambda \epsilon r|v|^2, 
$$
where the parameters $a$ and $\lambda>0$ are to be determined. \\[1mm]
Step 2: We determine  admissible pairs  $(a,\lambda)$ such that $E(t)$ exhibits exponential decay. \\[1mm]
Step 3:  Using $\dot W(t)=\langle \nabla f(\theta),\dot\theta\rangle=-2r\langle\nabla f(\theta),v\rangle$, we link $E$ to $W:=f(\theta)-f^*$ by
\begin{equation}\label{ew}
E= a W+\frac{ \epsilon \dot W}{2r}+\lambda \epsilon r|v|^2,
\end{equation}
and derive the convergence rate of $W$ based on the convergence rate of $E$.

\vskip2mm
\begin{proof}
Step 1: Decay of the control function. Define  
\begin{equation}\label{energy}
E= a (f(\theta)-f^*)-\epsilon\langle \nabla f(\theta),v\rangle +\lambda \epsilon r|v|^2,
\end{equation}
where parameters $a,\lambda>0$ will be determined later so that along the trajectory of (\ref{sgdemode}) $E=E(t)$ has an exponential decay rate. 

For each term in $E$ we proceed to evaluate their time derivative along the trajectory of (\ref{sgdemode}). For the first term, we get
$$
\frac{d}{dt}(f(\theta)-f^*)=\langle \nabla f(\theta),\dot\theta\rangle=-2r\langle \nabla f(\theta),v\rangle.
$$
For the second term, we have
\begin{align}\notag
\frac{d}{dt}\epsilon\langle \nabla f(\theta),v\rangle &=\epsilon\langle\nabla^2f(\theta)\dot\theta,v\rangle+\epsilon\langle\nabla f(\theta),\dot v\rangle\\\label{et2}
&=-2\epsilon r\langle\nabla^2f(\theta)v,v\rangle - \langle\nabla f(\theta),v\rangle+\langle \nabla f(\theta),\nabla F(\theta)\rangle.
\end{align}
Note that
$$
\langle\nabla^2f(\theta)v,v\rangle\leq L|v|^2, \quad \text{and} 
$$
$$
\langle \nabla f(\theta),\nabla F(\theta)\rangle
=\frac{1}{2 F(\theta)}|\nabla f(\theta)|^2
\geq \frac{\mu}{ F(\theta)}(f(\theta)-f^*).
$$
Here we used  $\nabla F(\theta)=\nabla f(\theta)/(2F(\theta))$ and the PL property for $f$.

Hence (\ref{et2}) reduces to
$$
\frac{d}{dt}\epsilon\langle \nabla f(\theta),v\rangle 
\geq -2\epsilon Lr|v|^2 - \langle \nabla f(\theta),v\rangle +\frac{\mu}{ F(\theta)}(f(\theta)-f^*).
$$
For the third term, we have
\begin{align*}
\frac{d}{dt}\epsilon r|v|^2 
&= \epsilon\dot r|v|^2 + 2\epsilon r\langle v,\dot v\rangle\\
&= -2\epsilon r|v|^4 + 2 r\langle v,- v+\nabla F(\theta)\rangle\\
&= -2\epsilon r|v|^4 - 2 r|v|^2+2 r\langle \nabla F(\theta),v\rangle\\
&\leq - 2 r|v|^2 + \frac{r}{
 F(\theta)}\langle \nabla f(\theta),v\rangle.
\end{align*}
Combining the above three estimates together we obtain 
\begin{equation}\label{denergy}
\dot E(t)\leq 
-\frac{\mu}{ F(\theta)}(f(\theta)-f^*)
+(2\epsilon L-2\lambda)r|v|^2
+ \bigg(-2ar+1+\frac{\lambda r}{F(\theta)}\bigg)\langle \nabla f(\theta),v\rangle.
\end{equation}
Note that (\ref{energy}) can be rewritten as
\begin{equation}\label{dfv}
\langle \nabla f(\theta),v\rangle = \frac{a}{\epsilon} (f(\theta)-f^*)+\lambda r|v|^2 -\frac{1}{\epsilon}E(t).
\end{equation}
This upon substitution into (\ref{denergy}) gives
\begin{equation}\label{de}
\dot E(t)\leq -\frac{b(t)}{\epsilon}E(t)+\left(\frac{ab(t)}{\epsilon}-\frac{\mu}{ F(\theta)}\right)(f(\theta)-f^*)+(\lambda b(t)+2\epsilon L-2\lambda)r|v|^2,     
\end{equation}
where $b(t):= -2ar(t)+1+\frac{\lambda r(t)}{F(\theta(t))}$. This leads to 
\begin{equation}\label{edecay}
\dot E(t)\leq -\frac{b(t)}{\epsilon}E(t),     
\end{equation}
as long as we can identify $a, \lambda$ so that 
\begin{subequations}\label{sced}
\begin{align}
&b(t)=-2ar(t)+1+\frac{\lambda r(t)}{F(\theta(t))}>0,\\
&\frac{ab(t)}{\epsilon}-\frac{\mu}{ F(\theta(t))}\leq 0,\\
&\lambda b(t)+2\epsilon L-2\lambda\leq 0,   
\end{align}
\end{subequations}
for all $t>0$. 
%under which (\ref{de}) can be reduced to
\vskip1mm
Step 2: Admissible choice for $\lambda$ and $a$. Using the solution bounds $r^*\leq r(t)\leq r_0$ and 
$F^*\leq F(\theta(t))\leq F(\theta_0)$, we have
$$
b_*\leq b(t)\leq b^*, \quad \forall t>0, 
$$
where 
\begin{align*}
& b_*=b_*(a,\lambda) = -2ar_0+1+\frac{\lambda r_*}{F(\theta_0)},\\
& b^*=b^*(a,\lambda) = -2ar^*+1+\frac{\lambda r_0}{F^*}.
\end{align*}
In order to ensure (\ref{sced}c) we must have $b^*<2$. Putting these together, condition (\ref{sced}) can be ensured by the following stronger constraints: 
\begin{subequations}\label{sced+}
\begin{align}
& 0<b_*,  \quad b^* <2,\\
&ab^*-\frac{\epsilon \mu}{ F(\theta_0)}\leq 0,\\
&\lambda b^* + 2\epsilon L-2\lambda \leq 0.   
\end{align}
\end{subequations}
A direct calculation shows that for small $\epsilon$, such admissible pair exists, with $a=O(\epsilon)$, and $\lambda \leq  \frac{F^*}{r_0}$ which is induced from $b^*<2$. To be more precise, we fix $\delta \in(0, 1)$ and require that  
$$
b_*\geq \delta. %   \quad b^*\leq 2-\delta.
$$
The constraint $b^*< 2$ would impose a lower bound for $a \sim \epsilon$ (which we should avoid to stay consistency with the discrete algorithm) unless $\lambda$ is chosen to satisfy $b^*+2ar^*\leq 2-\delta$. This is equivalent to requiring   
$$
\lambda \leq (1-\delta) \frac{F^*}{r_0}.
$$
With the above two constraints, it is safe to replace $b^*$ by $2-\delta$ in (\ref{sced+}b) and (\ref{sced+}c), respectively,  so that 
\begin{equation}\label{aa}
a(2-\delta)\leq \frac{\epsilon \mu}{F(\theta_0)}, \quad 2 \epsilon L \leq  \delta \lambda. 
\end{equation} 
The convergence rate estimate in the next step requires $a$ to be large as possible, we thus simply take 
\begin{equation}\label{a}
a= \frac{\epsilon \mu}{F(\theta_0)(2-\delta)}.
\end{equation}
%a_1 %\frac{\epsilon \mu}{F(\theta_0)(2-\delta)}. \end{equation} 
The second relation in (\ref{aa}) now reduces to 
$$
\epsilon \leq \frac{\delta \lambda }{2L}.
$$
Note that the constraint $b_*\geq \delta$ is met if $2ar_0 \leq 1-\delta +\frac{\lambda r^*}{F(\theta_0)}$, imposing another upper bound on $\epsilon$. To be concrete, we take 
\begin{equation}\label{la}
\lambda = (1-\delta) \frac{F^*}{r_0}, 
\end{equation}
then (\ref{sced+}) is met if 
$$
\epsilon\leq \min\{\epsilon_1, \epsilon_2\},
$$
with 
$$
\epsilon_1=\delta(1-\delta)\frac{F^*}{2Lr_0}, \quad 
\epsilon_2=\frac{1}{2\mu r_0}(2-\delta)(1-\delta)\bigg(F(\theta_0)+\frac{F^*r^*}{r_0}\bigg).
%(1-\delta)(2-\delta)\frac{F(\theta_0)}{2r_0\mu}\left(1 + \frac{r_0r^*}{F^*F(\theta_0)}\right).
$$
This is (\ref{eps}) with $r_0=F(\theta_0)$. Hence, for suitably small $\epsilon$,  we have obtained a set of admissible pairs $(a, \lambda)$ with 
\begin{equation}\label{ala}
a=\frac{\epsilon \mu}{F(\theta_0)(2-\delta)},  \quad \lambda = (1-\delta)\frac{F^*}{r_0}, 
\end{equation}
as $\delta$ varies in $(0, 1)$. 

\vskip1mm
Step 3: Convergence rate of $W$. With above choices of $(a, \lambda)$, we have  
\begin{equation}\label{bstar}
\dot E(t)\leq -\frac{b(t)}{\epsilon}E(t)\leq -\frac{b_*}{\epsilon}E(t).    
\end{equation}
This gives 
\begin{equation}\label{et0}
E(t)\leq E(0)e^{-(b_*/\epsilon)t}.   
\end{equation}
This when combined with (\ref{ew}), i.e.,  
$$
E= a W+\frac{ \epsilon \dot W}{2r}+\lambda \epsilon r|v|^2,
%E(t)= a W(t)-\epsilon \langle \nabla f(\theta),v\rangle + \lambda\epsilon r|v|^2, 
$$
%and $\dot W(t)=\langle \nabla f(\theta),\dot\theta\rangle=-2r\langle\nabla f(\theta),v\rangle$, 
and $E(0)=a W(0)$ (since $v_0=0$) allows us to rewrite (\ref{et0}) as
$$
\frac{2ar(t)}{\epsilon}W(t)+\dot W(t)\leq \frac{2a r(t)W(0)}{\epsilon}e^{-(b_*/\epsilon)t}.
$$
That is 
$$
\frac{d}{dt}(W(t)e^{\int_{0}^{t}(2ar(s)/\epsilon) ds}) \leq \frac{2a r(t)W(0)}{\epsilon}e^{-(b_*/\epsilon)t+\int_{0}^{t}(2ar(s)/\epsilon) ds}.
$$
Integration of this gives
\begin{align}\notag
W(t) &\leq W(0)e^{-\int_{0}^{t}(2ar(s)/\epsilon) ds} + \frac{2aW(0)}{\epsilon}e^{-\int_{0}^{t}(2ar(s)/\epsilon) ds} \int_{0}^{t}r(s)e^{-(b_*/\epsilon) s+\int_{0}^{s}(2ar(\tau)/\epsilon) d\tau}ds\\\notag
&\leq W(0)e^{-(2ar^*/\epsilon) t}+\frac{2ar_0W(0)}{\epsilon}\int_{0}^{t}e^{-(b_*/\epsilon)s}ds\\\label{rho}
&\leq W(0)e^{-(2ar^*/\epsilon)t}+\frac{2ar_0W(0)}{|b_*|}e^{-(b_*/\epsilon)t}.
%&\leq W(0) e^{-(2\mu r^*)/(F(\theta_0)(1+\delta)) t}+\frac{2r_0 E(0)}{\delta}e^{-\delta/\epsilon t}
\end{align}
Recall in Step 2, $a$ is chosen as $\frac{\epsilon\mu}{F(\theta_0)(2-\delta)}$ for a fixed $\delta\in(0,1)$ and $\epsilon\leq\{\epsilon_1,\epsilon_2\}$ so that $b_*\geq\delta$, also using $r_0=F(\theta_0)$, we have
\begin{equation}\label{Wrate}
W(t) \leq W(0) \bigg( e^{-\frac{2\mu r^*}{F(\theta_0)(2-\delta)} t}+\frac{2 \mu\epsilon}{\delta(2-\delta)}e^{-\frac{\delta}{\epsilon} t}\bigg).    
\end{equation}
This is (\ref{cvgen}) with $W(t)=f(\theta(t))-f^*$.
\end{proof}
\vskip2mm

\begin{remark} In Step, equation (\ref{rho}) reveals that the convergence rate is dominated by $e^{-Kt} $, where  $K(a, \lambda;  \epsilon):=\min\{2ar^*/\epsilon, b_*/\epsilon\}$. Consequently,  in the current methodology,  determining the optimal values  of $a,\lambda$ involves solving the following constrained optimization problem:
\begin{equation}\label{opt}
\begin{aligned}
&\max_{a,\lambda} K(a, \lambda; \epsilon) \\ %\min\{2ar^*, b_*\} \\
s.t.\quad & 0<b_*<b^*<2,\quad ab^*-\frac{\epsilon \mu}{ B}\leq 0,\quad \lambda b^* + 2\epsilon L-2\lambda \leq 0.   
\end{aligned}    
\end{equation}
The constraint $0<b_*<b^*<2$ in (\ref{opt}) forms an open set, making it natural to confine the range to $\delta\leq b_*<b^*\leq 2-\delta$ for a small $\delta>0$. Notably, when $\epsilon$ is sufficiently  small, the expression  
$$
K(a,\lambda; \epsilon)=2ar^*/\epsilon 
$$
holds true, owing to the fact that $a\leq \frac{\epsilon\mu}{F(\theta_0)(2-\delta)}$ in Step 2. Consequently, the estimation obtained in Step 2 is nearly  optimal within the confines of the present approach.
\end{remark}

\section{Convergence of trajectories}
In this section, we provide insights into the previously established results as stated in Theorems \ref{lmt} and \ref{cvg} and put forth several variations and extensions. 

In Theorem \ref{lmt}, we demonstrated that $\nabla F(\theta(t))\to 0$ and $F(\theta(t))\to F^*$. It is reasonable to anticipate  that, under appropriate  conditions on $F$ (or $f$),  $\theta(t)$ will converge toward a single minimum point of $F$. Conversely, the PL condition (\ref{PL}) used in Theorem \ref{cvg} pertains to the geometric attributes of $f$ rather than its regularity.  %also it implies $|\theta(t)-%\theta^*|^2 \leq (f-f^*)/(2\mu)$, 
We revisit the details of the proof of Theorem  \ref{cvg}, wherein for sufficiently small $\epsilon$, $E$ is bounded below by $(f-f^*)+|v|^2$. Consequently,  the exponential decay of $E$ extends to $|v|$, leading to  the convergence of $\theta(t)$ since $\dot \theta =2rv$. 

It is interesting to explore  the minimal general condition on $f$ that would ensure the convergence for $\theta(t)$ towards a single limit-point. In general, addressing this question poses significant challenges.  However, we can identify a larger class of functions beyond those  covered by the PL condition. It is important to note that, following  \L ojasiewicz's groundbreaking work on real-analytic functions \cite{Lo63, Lo84}, the key factor ensuring   convergence of $\theta(t)$ with $\dot \theta=-\nabla f(\theta)$ lies in the geometric properties of $f$. This is exemplified by a counterexample presented by  Palis and De Melo \cite[p.14]{PM82}, highlighting that $C^\infty$ smoothness is  insufficient to guarantee single limit-point convergence. 
%the set of cluster points of a bounded trajectory generated by the gradient of a $C^\infty$ function is far from being a singleton. 
These findings illustrate the importance of gradient vector fields of functions satisfying the \L ojasiewicz inequality. The \L ojasiewicz inequality asserts that for a real-analytic function and a critical point $a$  there exists $\alpha \in (0, 1)$  such that the function $|f-f (a)|^{1-\alpha} |\nabla f|^{-1}$  remains bounded away from $0$ around $a$. Such gradient inequality has been extended by Kurdyka \cite{Ku98} to $C^1$ functions whose graphs belong to an o-minimal structure, and Bolte et al. \cite{BDL07, BDL09} extended it to a broad  class of nonsmooth subanalytic functions.  This gradient inequality,  or its generalization with a desingularizing function,  is known as the Kurdyka-\L ojasiewicz inequality. In the optimization literature, the KL inequality has proven to be a powerful tool for characterizing the convergence properties of iterative algorithms; refer to, for instance,  \cite{AMA05, AB09, BST14, FGP15, GRV17}.

For the system (\ref{sgdemode}), we show that the \L ojasiewicz inequality is sufficient to ensure the convergence of $\theta(t)$. Additionally, we provide convergence rates for
$\theta(t)$ under different values of $\alpha$.

\begin{definition}[\L ojasiewicz inequality] %Kurdyka- 
For a differentiable function $f:\R^n \to\R$ with ${\rm argmin}f \neq\emptyset $, we say $f$ satisfies the \L ojasiewicz inequality if there exists $\sigma>0$, $c>0$, and $\alpha\in (0, 1)$, such that the following inequality holds for any $a\in {\rm argmin}f$:
\begin{equation}\label{KL}
 c|f(\theta)-f(a)|^{1-\alpha} \leq |\nabla f(\theta)|,\quad\forall\theta \in B(a, \sigma),
\end{equation}
where $B(a, \sigma)$ is the neighborhood of $a$ with radius $\sigma$.
\end{definition}
Note that a diverse range of functions has been shown to satisfy (\ref{KL}), ranging from real-analytic functions \cite{Lo63} to non-smooth lower semi-continuous functions \cite{BDL07}. The PL condition (\ref{PL}) stated as a global condition corresponds to (\ref{KL}) with $\alpha=1/2$ and $c=\sqrt{2\mu}$. 

\vskip2mm
\begin{lemma}\label{lem}
Let $f:\R^n\to\R$ satisfy the \L ojasiewicz inequality (\ref{KL}), then 
%$F$ assumes a constant value on $argmin f$, and there exists $\sigma>0$ and $\alpha\in(0,1/2]$ such that 
\begin{align}\label{FKL}
|F(\theta)-F(a)|^{1-\alpha}\leq |\nabla F(\theta)|, \quad \forall \theta \in B(a, \sigma). 
\end{align} 
\end{lemma}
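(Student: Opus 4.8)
The plan is to transfer the \L ojasiewicz inequality from $f$ to $F=\sqrt{f+c}$ using the two algebraic relations that link them: the gradient identity $\nabla f=2F\nabla F$ and the elementary factorization of a difference of square roots. First I would write, for $\theta\in B(a,\sigma)$,
\[
F(\theta)-F(a)=\sqrt{f(\theta)+c}-\sqrt{f(a)+c}=\frac{f(\theta)-f(a)}{F(\theta)+F(a)}.
\]
Since $a\in{\rm argmin}f$ we have $f(\theta)\ge f(a)$ on all of $\R^n$, hence $F(\theta)\ge F(a)=F^*$ and therefore $F(\theta)+F(a)\ge 2F^*>0$. This yields the one-sided bound
\[
|F(\theta)-F(a)|\le \frac{1}{2F^*}\,|f(\theta)-f(a)|.
\]

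Next, because $0<1-\alpha<1$, the map $s\mapsto s^{1-\alpha}$ is monotone increasing on $[0,\infty)$, so raising the previous bound to the power $1-\alpha$ preserves the inequality, giving $|F(\theta)-F(a)|^{1-\alpha}\le (2F^*)^{-(1-\alpha)}|f(\theta)-f(a)|^{1-\alpha}$. I would then invoke the \L ojasiewicz inequality (\ref{KL}) for $f$ to replace $|f(\theta)-f(a)|^{1-\alpha}$ by $c^{-1}|\nabla f(\theta)|$, and finally substitute $|\nabla f(\theta)|=2F(\theta)|\nabla F(\theta)|$. Collecting the factors produces
\[
|F(\theta)-F(a)|^{1-\alpha}\le \frac{2F(\theta)}{c\,(2F^*)^{1-\alpha}}\,|\nabla F(\theta)|,
\]
which is exactly a \L ojasiewicz inequality for $F$ on $B(a,\sigma)$, with the same exponent $\alpha$ and a modified constant.

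The only point that needs care is the prefactor $F(\theta)$, which is not constant. To turn the last display into a bound with a fixed constant I would set $M:=\sup_{\theta\in\overline{B(a,\sigma)}}F(\theta)$, which is finite by continuity of $F$ on the bounded (closed) ball (one may shrink $\sigma$ slightly if an open ball is insisted upon), so that $F(\theta)\le M$. The resulting constant is $\tilde c=2M\big/\big(c\,(2F^*)^{1-\alpha}\big)$, and after renormalizing — or absorbing $\tilde c$ into the implicit constant of the statement — one recovers (\ref{FKL}). The main, and essentially only, obstacle is this bookkeeping of constants together with the upper bound on $F$; the structural content of the argument is the exact factorization of $F(\theta)-F(a)$, which is what lets the geometry of $f$ pass to $F$ without changing the exponent.
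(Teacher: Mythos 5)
Your proof is correct and follows essentially the same route as the paper's: both rest on the factorization $f(\theta)-f(a)=\bigl(F(\theta)-F(a)\bigr)\bigl(F(\theta)+F(a)\bigr)$ together with $\nabla f=2F\nabla F$, and both handle the non-constant factor $F(\theta)$ by taking a maximum over $B(a,\sigma)$ and folding it into the constant. Your explicit use of $F(\theta)+F(a)\geq 2F^*$ and the honest remark that the final constant $\tilde c$ must be absorbed (i.e., (\ref{FKL}) holds only up to a renormalization of the \L ojasiewicz constant $c$) matches the paper's own bookkeeping, where (\ref{FKL}) is likewise obtained by taking $c\geq 2\max_{\theta\in B(a,\sigma)}|F(\theta)||F+F(a)|^{\alpha-1}$.
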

\begin{proof} Using the relation $F^2=f+c$, inequality (\ref{KL}) reduces to 
$$
c|F(\theta)+F(a)|^{1-\alpha}|F(\theta)-F(a)|^{1-\alpha}\leq 2|F(\theta)||\nabla F(\theta)|.
$$
This leads to (\ref{FKL}) if $c$ is chosen as 
$2 \max_{\theta\in B(a, \sigma)}|F(\theta)||F+F(a)|^{\alpha-1}$.
\end{proof}

\vskip2mm
\begin{theorem}\label{cvgtheta}
Consider $\theta(t)$ as a bounded solution of (\ref{sgdemode}) as stated in Theorem \ref{lmt}, and further assume that $F$ satisfies (\ref{FKL}). Then $\dot\theta\in L^1([0,+\infty])$ and $\theta(t)$ converges towards a local minimum point of $F$ as $t\to\infty$.
\end{theorem}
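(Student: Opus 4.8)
The plan is to prove that the trajectory has finite length, i.e. $\int_0^\infty|\dot\theta(t)|\,dt<\infty$. Since $\dot\theta=-2rv$, this amounts to $\int_0^\infty r|v|\,dt<\infty$, and once it holds the Cauchy estimate $|\theta(t)-\theta(s)|\le\int_s^t|\dot\theta|\,d\tau$ forces $\theta(t)$ to converge to a single point $\theta_\infty$ as $t\to\infty$. Theorem \ref{lmt} already gives $\nabla F(\theta(t))\to0$ and $F(\theta(t))\to F^*$, a local minimum value; passing to the limit identifies $\theta_\infty$ as a critical point with $F(\theta_\infty)=F^*$, hence a local minimum point of $F$. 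Thus the whole statement (including the $L^1$ claim, which is exactly finite length) reduces to the length estimate.

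First I would set up a uniform \L ojasiewicz inequality along the tail. By Theorem \ref{exist} the trajectory is bounded, so its $\omega$-limit set $\mathcal A$ is nonempty, compact and connected, and by Theorem \ref{lmt} consists of critical points of $F$ at the common level $F^*$. Covering $\mathcal A$ by finitely many balls on which (\ref{FKL}) holds and keeping the worst exponent and constant yields a neighborhood $\mathcal N$ of $\mathcal A$ together with $c>0$, $\alpha\in(0,1)$ such that $(F(\theta)-F^*)^{1-\alpha}\le c^{-1}|\nabla F(\theta)|$ for $\theta\in\mathcal N$. Since $\operatorname{dist}(\theta(t),\mathcal A)\to0$, there is $t_0$ with $\theta(t)\in\mathcal N$ for all $t\ge t_0$, so the inequality is available on the tail.

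The core is to find a monotone Lyapunov function whose dissipation controls both $|v|$ and $|\nabla F|$. The energy $Q-F^*=(F-F^*)+\epsilon r|v|^2$ of Theorem \ref{exist} decreases, but its dissipation only sees $|v|^2$, and because $v$ is a lagged (exponential-moving-average) version of $\nabla F$ it need not be aligned with $\nabla F$ instantaneously — indeed $F(\theta(t))$ is not monotone — so the plain desingularization $\tfrac{d}{dt}(F-F^*)^\alpha$ fails. To repair this I would introduce the corrected function
\[
\Phi=(F(\theta)-F^*)+\epsilon r|v|^2-\gamma\,\epsilon\langle\nabla F(\theta),v\rangle ,
\]
with a small $\gamma>0$. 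Differentiating along (\ref{sgdemode}) and using $\epsilon\dot v=-v+\nabla F$, $\dot r=-2r|v|^2$, and $\langle\nabla^2F\,v,v\rangle\le L_F|v|^2$ (with $L_F$ a bound for the largest eigenvalue of $\nabla^2F$ on the bounded solution domain), the cross term generates a favorable $-\gamma|\nabla F|^2$ contribution, so that for $\gamma,\epsilon$ small one gets $\dot\Phi\le-c_1\big(|v|^2+|\nabla F|^2\big)$. The standard descent inequality $|\nabla F|^2\le 2L_F(F-F^*)$ (valid since $F^*=\min F$ on the invariant domain) shows the cross term is dominated by $(F-F^*)$, so $\Phi\ge0$, $\Phi\to0$, and $\Phi$ is comparable to $(F-F^*)+\epsilon r|v|^2$. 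Then for $t\ge t_0$,
\[
-\frac{d}{dt}\Phi^{\alpha}=\alpha\Phi^{\alpha-1}(-\dot\Phi)\ge\alpha c_1\Phi^{\alpha-1}\big(|v|^2+|\nabla F|^2\big),
\]
and bounding $\Phi^{1-\alpha}\lesssim(F-F^*)^{1-\alpha}+|v|^{2(1-\alpha)}\lesssim c^{-1}|\nabla F|+|v|^{2(1-\alpha)}$ via the \L ojasiewicz inequality, together with Young's inequality, gives $-\tfrac{d}{dt}\Phi^{\alpha}\ge c_2(|v|+|\nabla F|)\ge c_2|v|$. Integrating from $t_0$ to $\infty$ yields $\int_{t_0}^\infty|v|\,dt\le c_2^{-1}\Phi(t_0)^{\alpha}<\infty$, hence $\int_0^\infty|\dot\theta|\,dt<\infty$ and the convergence of $\theta(t)$.

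The main obstacle is precisely the momentum mismatch: unlike the gradient flow (\ref{gf}), the velocity $v$ here is not proportional to $\nabla F$, so no single power of the natural energy $(F-F^*)+\epsilon r|v|^2$ is monotone enough to dominate $|\dot\theta|$. The decisive step is therefore constructing the corrected energy $\Phi$ and verifying that its dissipation simultaneously controls $|v|^2$ and $|\nabla F|^2$ while $\Phi$ stays comparable to the potential-plus-kinetic energy. The final desingularizing inequality closes cleanly when $1-\alpha\ge1/2$ (i.e. $\alpha\le1/2$, the range relevant for (sub)analytic $F$ and consistent with the PL case $\alpha=1/2$, $c=\sqrt{2\mu}$ noted after (\ref{KL})); for $\alpha>1/2$ one must additionally absorb the kinetic term $|v|^{2(1-\alpha)}$, which forces exploiting smallness of $\epsilon$ and the smoothness bound relating $|v|$, $|\nabla F|$ and $(F-F^*)$.
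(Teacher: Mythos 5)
Your proposal is correct in its main line and reaches the paper's conclusion by a genuinely different choice of the key functional. The paper augments the energy with an \emph{acceleration} term, $R=(F(\theta)-F^*)+\epsilon r|v|^2+\tfrac{\lambda\epsilon}{2}|\dot v|^2$, proves $\dot R\leq -C_1(|v|+|\dot v|)^2$, and recovers the gradient from the dissipated quantities through the ODE itself, $|\nabla F|=|v+\epsilon\dot v|\leq |v|+\epsilon|\dot v|$, so that $R^{1-\alpha}\leq C_3(|v|+|\dot v|)$ and the desingularization $-\tfrac{d}{dt}R^\alpha\geq C(|v|+|\dot v|)$ yields $v,\dot v\in L^1$ and hence finite length. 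You instead add the cross term $-\gamma\epsilon\langle\nabla F(\theta),v\rangle$ (mirroring the control function of Theorem \ref{cvg}), which makes $|\nabla F|^2$ appear directly in the dissipation; your computation is sound, since $\dot Q=-2r|v|^2-2\epsilon r|v|^4$ and $\tfrac{d}{dt}\epsilon\langle\nabla F,v\rangle=-2\epsilon r\langle\nabla^2F\,v,v\rangle-\langle\nabla F,v\rangle+|\nabla F|^2$ give, after Young's inequality and $r^*\leq r\leq r_0$, the bound $\dot\Phi\leq -c_1\bigl(|v|^2+|\nabla F|^2\bigr)$ for $\gamma$ small. Each route buys what the other gets indirectly: yours needs no bookkeeping of $|\dot v|$ and applies the \L ojasiewicz inequality immediately, while the paper's avoids the descent inequality $|\nabla F|^2\leq 2L_F(F-F^*)$ altogether (you do need it, both for $\Phi\geq 0$ and for $\Phi\lesssim (F-F^*)+|v|^2$, and it requires a Hessian bound on a convex neighborhood of the bounded trajectory, not just along it). Your explicit uniformization of (\ref{FKL}) over the compact connected $\omega$-limit set is also a point the paper only asserts implicitly.

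The one genuine gap is the case $\alpha\in(1/2,1)$. Your suggestion to absorb the kinetic term by exploiting smallness of $\epsilon$ cannot work as stated: the obstruction is that $|v|^{2(1-\alpha)}/|v|=|v|^{1-2\alpha}\to\infty$ as $|v|\to 0$ when $\alpha>1/2$, and no choice of $\epsilon$ or of the smoothness constants changes this. The paper's fix is elementary and you should adopt it: once $|F(\theta(t))-F^*|\leq 1$ for large $t$, one has $|F-F^*|^{1-\alpha/2}\leq |F-F^*|^{1-\alpha}\leq |\nabla F|$, i.e.\ (\ref{FKL}) holds with the reduced exponent $\alpha'=\alpha/2\in(0,1/2)$, and your $\alpha\leq 1/2$ argument then applies verbatim with $\alpha'$ in place of $\alpha$ (at the cost of a weaker exponent, which is exactly why the rate in case (iii) of the subsequent rate theorem is not optimal).
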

\vskip2mm

\begin{proof}
Let $\omega(\theta)$ be the $\omega$-limit set of $\theta$. 
%Classical results (e.g. \cite{T12}) state that $\omega(\theta)$ is a compact connected set that consists of critical points of $F$ ($\nabla F(\theta)=0)$. 
According to Theorem 5.3, $F(\theta(t))\to F^*$,  ensuring $F=F^*$ on $\omega(\theta)$, and 
$$
\lim_{t\to\infty}\text{dist}(\theta(t),\omega(\theta))=0.
$$
The inequality (\ref{FKL}) asserts the existence of $T>0$ and $\alpha\in(0,1)$ such that for any $t\geq T$,
$$
|F(\theta(t))-F^*|^{1-\alpha}\leq |\nabla F(\theta(t))|.
$$
The proof of the convergence of $\theta(t)$ relies on a novel functional 
$$
R(t) = Q(t) -F^* + \frac{\lambda \epsilon}{2}|\dot v(t)|^2,
$$
with the parameter $\lambda$ yet to be determined. A direct calculation gives
\begin{align*}
\dot R &= \dot Q + \lambda \epsilon \dot v \cdot \ddot v\\   
&= -2r|v|^2(1+\epsilon|v|^2) + \lambda \dot v \cdot \frac{d}{dt}(\nabla F(\theta(t)) - v)\\
&= -2r|v|^2(1+\epsilon|v|^2) - \lambda |\dot v|^2 + \lambda \dot v\cdot D^2F\dot\theta\\
&= -2r|v|^2(1+\epsilon|v|^2) - \lambda |\dot v|^2 - 2\lambda r\dot v\cdot D^2Fv\\
&\leq -2r|v|^2(1+\epsilon|v|^2) - \lambda |\dot v|^2 + \lambda L_F \delta |\dot v|^2 + \frac{\lambda L_Fr^2}{\delta}|v|^2\\
&\leq -r|v|^2(2-\frac{\lambda L_F r_0}{\delta}) - \lambda |\dot v|^2(1-L_F\delta).
\end{align*}
Here $-2ab\leq\delta a^2+\frac{b^2}{\delta}$ for any $\delta>0$ and $\|D^2F(\theta)\|\leq L_F$ were used in the first inequality.
Take $\delta = 1/(2L_F)$ and $\lambda=1/(2L_F^2r_0)$, then 
\begin{align*}
\dot R &\leq -r^*|v|^2 - \frac{1}{2} \lambda |\dot v|^2\\
&= -r^*|v|^2 - \frac{1}{4L_F^2r_0}|\dot v|^2\\
&\leq -\frac{1}{2} \min\{r^*, \frac{1}{4L_F^2r_0}\}(|v|+|\dot v|)^2\\
&=: -C_1(|v|+|\dot v|)^2.
\end{align*}
In the last inequality, we used the inequality $(a+b)^2\leq 2(a^2+b^2)$.  Conversely, 
\begin{align*}
R(t) &= F(\theta(t)) -F^* + \epsilon r |v|^2 + \frac{\epsilon}{4L_F^2 r_0}|\dot v|^2\\
&\leq C_2(|F(\theta(t))-F^*| + |v|^2 + |\dot v|^2),
\end{align*}
where $
C_2 = \max\{1, \epsilon r_0, \frac{\epsilon}{4L_F^2r_0}\}.
$
\vskip1mm
We proceed by distinguishing two cases:\\[1mm]
(i) $\alpha \in (0, 1/2]$. 
Using the inequality $(a+b)^{1-\alpha}\leq a^{1-\alpha} + b^{1-\alpha}$, we obtain 
$$
R(t)^{1-\alpha} \leq C_2^{1-\alpha}(|F(\theta(t))-F^*|^{1-\alpha} + |v|^{2(1-\alpha)} + |\dot v|^{2(1-\alpha)}).
$$
Using (\ref{FKL}), for $t\geq T$,
$$
R(t)^{1-\alpha} \leq C_2^{1-\alpha} (|\nabla F(\theta(t))| + |v|^{2(1-\alpha)} + |\dot v|^{2(1-\alpha)}).
$$
Since $|v(t)|, |\dot v(t)|\to 0$ as $t\to\infty$ and $2(1-\alpha)\geq 1$, 
\begin{align}\label{r1a}
R(t)^{1-\alpha} &\leq  C_2^{1-\alpha}(|\nabla F(\theta(t))| + |v(t)| + |\dot v(t)|)\notag\\
&\leq C_2^{1-\alpha}(|v+\epsilon \dot v| + |v(t)| + |\dot v(t)|)\notag\\
&\leq C_2^{1-\alpha} (2|v(t)| + (\epsilon+1)|\dot v(t)|)\notag\\
&\leq C_3(|v(t)| + |\dot v(t)|),
\end{align}   
where $C_3 = C_2^{1-\alpha} \max\{2, \epsilon+1\}$. 

We are prepared to prove that $\dot\theta$ belongs to  $L^1([0,\infty])$ by considering $t\geq T$:
\begin{align*}
-\frac{d}{dt}R (t) ^\alpha
&= - \alpha R(t)^{\alpha-1}\cdot \dot R = \alpha \frac{-\dot R}{R^{1-\alpha}}\\
&\geq \alpha \frac{C_1(|v|+|\dot v|)^2}{C_3(|v|+|\dot v|)}\\
& = C (|v|+|\dot v|), \quad C:=\frac{\alpha C_3}{C_1}.
\end{align*}
Integrating both sides from $T$ to $\infty$ yields: 
\begin{equation}\label{vbd}
 \int_{T}^{\infty} (|v|+|\dot v|)dt \leq C(R(T)^{\alpha} - \lim_{t\to\infty}R (t)^{\alpha}) = C R(T)^{\alpha}.   
\end{equation}
Here, we take into account that $ \lim_{t\to\infty}R (t)^\alpha=0$. Note that $|\dot\theta| = 2r|v|\leq 2r_0|v|$, hence
$$
\int_{T}^{\infty}|\dot\theta|dt \leq 2r_0 \int_{T}^{\infty}|v|dt
\leq 2r_0C R(T)^\alpha \leq 2r_0CR(0)^\alpha. 
$$
Thus $\dot \theta$ belongs to $L^1([0,\infty])$, implying that, $\theta^*:=\lim_{t\to\infty}\theta(t)$ exists, and  $F(\theta^*)=F^*$. \\[1mm]
(ii) For $\alpha \in (1/2, 1)$, we set $\alpha'=\alpha/2 \in (0, 1/2)$. With this adjustment, (\ref{FKL}) remains valid for $\alpha'$ since
$$
|F-F^*|^{1-\alpha'}\leq |F-F^*|^{1-\alpha}
$$
for $|f-F^*|$ suitably small. Also, (\ref{r1a}) holds with $\alpha$ replaced by $\alpha'$. Thus, we conclude the convergence of $\theta(t)$ in this case. Moreover, 
\begin{equation*}
 \int_{T}^{\infty} (|v|+|\dot v|)dt \leq C R(T)^{\alpha/2} 
\end{equation*}
for some constant $C$. 
\end{proof}
\vskip2mm

Before getting into the estimation of the rate of convergence, let us define 
\begin{equation}
u(t) := \int_{t}^{\infty} (|v(t)|+|\dot v(t)|)dt.
\end{equation} 
This expression serves to control the tail length function for both $\theta(t)$ and $v(t)$. In fact, \begin{equation}\label{2ru}
\begin{aligned}
|\theta(t)-\theta^*|  % \left|\int_{\infty}^t\dot\theta(t)dt\right|
& \leq \int^{\infty}_t|\dot\theta(t)|dt = \int^{\infty}_t 2r(t)|v(t)|dt
\leq 2r_0\int^{\infty}_t |v(t)|dt \leq 2r_0u(t).
\end{aligned}    
\end{equation}
From (\ref{vbd}) in the proof of Theorem 6.1, we see that 
$$
u(T)\leq C R(T)^\beta, \quad 
\beta= 
\begin{cases}
\alpha, & \text{if $0<\alpha\leq\frac{1}{2}$},\\
\alpha/2, & \text{if $\frac{1}{2}<\alpha<1$}.
\end{cases}
$$
This inequality remains true for every $t\geq T$, thus  
\begin{align}\label{uR}
u(t)\leq C R(t)^\beta \quad \forall t\geq T. 
\end{align}
We now present the following result. 

\vskip2mm
\begin{theorem} Under the same conditions as in Theorem \ref{cvgtheta},  there exists $T$ such that for any $t\geq T$, the following results hold: 
\begin{enumerate}[(1)]
\item If $\alpha=\frac{1}{2}$, then
$$
|\theta(t)-\theta^*| \leq C(T)e^{-Kt};
$$
\item If $0<\alpha <\frac{1}{2}$, then
$$
|\theta(t)-\theta^*|\leq C(T)(t+1)^{-\frac{\alpha}{1-2\alpha}};
$$
\item If $\frac{1}{2}<\alpha<1$, then
$$
|\theta(t)-\theta^*|\leq C(T)(t+1)^{-\frac{\alpha}{2-2\alpha}},
$$
\end{enumerate}
where $C(T)$ depends on $T$ and $K>0$ is independent of $T$. 
\end{theorem}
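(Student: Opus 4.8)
The plan is to collapse all three cases to a single scalar differential inequality for the tail function $u(t)=\int_t^\infty(|v(s)|+|\dot v(s)|)\,ds$ and then integrate it. Since the bound $|\theta(t)-\theta^*|\leq 2r_0u(t)$ in (\ref{2ru}) already reduces each claim to an estimate on $u(t)$, and since by definition $\dot u(t)=-(|v(t)|+|\dot v(t)|)$, the whole proof amounts to converting the two inequalities produced inside the proof of Theorem \ref{cvgtheta} into a closed relation involving $u$ alone. Throughout I may assume $u(t)>0$, since $u$ is nonincreasing and nonnegative and the statement is trivial once it vanishes.

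First I would record the two ingredients, writing $\gamma=\alpha$ when $0<\alpha\leq 1/2$ and $\gamma=\alpha/2$ when $1/2<\alpha<1$. On one hand, (\ref{r1a}) (used with $\alpha$ replaced by $\alpha'=\alpha/2$ in the regime $\alpha>1/2$, exactly as in case (ii) of the proof of Theorem \ref{cvgtheta}) gives $R(t)^{1-\gamma}\leq C_3(|v|+|\dot v|)=-C_3\dot u$ for $t\geq T$. On the other hand, (\ref{uR}) reads $u(t)\leq C R(t)^{\gamma}$, i.e. $R(t)\geq (u(t)/C)^{1/\gamma}$, whence $R^{1-\gamma}\geq (u/C)^{(1-\gamma)/\gamma}$. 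Combining the upper and lower bounds on $R^{1-\gamma}$ yields
$$
-\dot u(t)\;\geq\; \tfrac{1}{C_3}\,R(t)^{1-\gamma}\;\geq\; K'\,u(t)^{p},\qquad p:=\tfrac{1-\gamma}{\gamma},
$$
with $K'=(C_3 C^{\,p})^{-1}>0$, valid for all $t\geq T$. This single inequality drives all three rates.

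It then remains to integrate $\dot u\leq -K'u^{p}$. When $\alpha=1/2$ one has $\gamma=1/2$ and $p=1$, so $(\ln u)'\leq -K'$ gives $u(t)\leq u(T)e^{-K'(t-T)}$, hence $|\theta(t)-\theta^*|\leq 2r_0u(T)e^{-K'(t-T)}$; this is case (i), with $K=K'$ structural (depending only on $\epsilon,L_F,r_0,r^*,\alpha$) and $C(T)=2r_0u(T)e^{K'T}$. When $\alpha\neq 1/2$ one has $p>1$; dividing by $u^p$ and using $\frac{d}{dt}u^{1-p}=(1-p)u^{-p}\dot u\geq (p-1)K'$, I integrate from $T$ to $t$ to obtain $u(t)^{1-p}\geq u(T)^{1-p}+(p-1)K'(t-T)$, so that
$$
u(t)\leq\Big(u(T)^{1-p}+(p-1)K'(t-T)\Big)^{-\frac{1}{p-1}}\leq C(T)(t+1)^{-\frac{1}{p-1}}.
$$
A direct computation gives $\frac{1}{p-1}=\frac{\gamma}{1-2\gamma}$, which equals $\frac{\alpha}{1-2\alpha}$ when $\gamma=\alpha$ (case (ii)) and $\frac{\alpha}{2-2\alpha}$ when $\gamma=\alpha/2$ (case (iii)); inserting this into $|\theta(t)-\theta^*|\leq 2r_0u(t)$ produces the stated bounds.

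The main obstacle is the closure step: obtaining a self-contained inequality for $u$ requires pairing the \emph{upper} bound $R^{1-\gamma}\leq -C_3\dot u$ coming from the \L ojasiewicz inequality (\ref{FKL}) with the \emph{lower} bound $R\geq (u/C)^{1/\gamma}$ coming from (\ref{uR}), and then carefully tracking the exponents through the substitution $\alpha\mapsto\alpha'=\alpha/2$ in the regime $\alpha>1/2$. Once the exponent $p=(1-\gamma)/\gamma$ is identified, the integration is routine; the only care needed is to absorb the initial data at $t=T$ into the prefactor $C(T)$ while keeping the rate $K$ in case (i) and the algebraic exponents $\frac{1}{p-1}$ in cases (ii)--(iii) structural, i.e. independent of $T$.
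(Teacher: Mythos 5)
Your proposal is correct and takes essentially the same route as the paper: both reduce the claim via (\ref{2ru}) to a bound on $u(t)$, combine (\ref{r1a}) (with $\alpha$ replaced by $\alpha/2$ when $\alpha>1/2$) and (\ref{uR}) into the single differential inequality $\dot u \leq -K u^{(1-\gamma)/\gamma}$, and integrate, with matching exponent bookkeeping $\tfrac{1}{p-1}=\tfrac{\gamma}{1-2\gamma}$. The only cosmetic difference is that the paper integrates by comparison with the exact solution $z$ of $\dot z = -Kz^{(1-\gamma)/\gamma}$, $z(T)=u(T)$, whereas you integrate $\tfrac{d}{dt}u^{1-p}$ directly (handling $u>0$ explicitly), which is equivalent.
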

\vskip2mm

\begin{proof}
Let $B(\theta^*, \sigma)$ be a neighborhood of $\theta^*$ in which the \L josiewicz inequality holds. Since $\theta(t)$ convergese to $\theta^*$ there exits $T>0$ such that $\theta(t)\in B(\theta^*, \sigma)$ for every $t\geq T.$  In particular, (\ref{uR}) holds.  
%Thus it suffices to derive the convergence rate of $u(t)$. 
It suffices to consider $\beta=\alpha \in (0, 1/2]$: 
\begin{align*}
u(t) &\leq C R(t)^{\alpha} = C (R(t)^{1-\alpha})^{\frac{\alpha}{1-\alpha}} \\
&\leq C (C_3 (|v(t)|+|\dot v(t)|))^{\frac{\alpha}{1-\alpha}}
= C C_3^{\frac{\alpha}{1-\alpha}}(-\dot u(t))^{\frac{\alpha}{1-\alpha}},
\end{align*}
where $C_3$ was defined in (\ref{r1a}). The above can be rearranged as
\begin{equation}\label{ualpha}
\dot u(t) \leq - K u(t)^{\frac{1-\alpha}{\alpha}},    
\end{equation}
where $K = \frac{C^{\frac{1-\alpha}{\alpha}}}{C_3}$. Next we define another problem of form 
\begin{equation}\label{ualpha+}
\dot z(t) = - K z(t)^{\frac{1-\alpha}{\alpha}}, \quad z(T)=u(T),     
\end{equation}
so that $u(t)\leq z(t)$ by comparison; hence by (\ref{2ru}) we obtain $|\theta(t)-\theta^*|\leq 2r_0z(t)$.

\vskip1mm
The solution of $z(t)$ can be determined for two cases: \\[1mm]
(1) $\alpha=\frac{1}{2}$. In this case, (\ref{ualpha+})  of form 
 $ \dot z=-K z$ admits a unique solution
$$
z(t)=u(T)e^{KT}e^{-Kt}.
$$
(2) $0<\alpha <\frac{1}{2}$. The exact solution in such case can be obtained as
$$
z(t)=\left(u(T)^{2-1/\alpha}+K(\frac{1}{\alpha}-2)(t-T)\right)^{-\frac{\alpha}{1-2\alpha}}
$$
which is bounded by $C(T)(t+1)^{-\frac{\alpha}{1-2\alpha}}$.\\[1mm]
(3) $\frac{1}{2}<\alpha<1$. This case reduces to case (2) by simply replacing $\alpha$ by $\alpha/2$ in the obtained convergence rate for (2). The proof is complete. 
\end{proof}
\vskip2mm

\begin{remark} The convergence rate in (3) is suboptimal as  it relies on (\ref{uR}) with $\beta=\alpha/2$ for $1/2<\alpha<1$. In this scenario,  the full potential of the \L ojasiewicz inequality is not used in the proof of Theorem \ref{cvgtheta}. Had (\ref{uR}) held with $\beta=\alpha$ in such cases, it would have led to  $z(t)=0$ for some finite $t$, indicating a faster,  finite time convergence. This assertion is demonstrated in \cite[Theorem 4.7]{BDL07} for the case for $\dot \theta =-\nabla f(\theta)$.  
\end{remark}
\vskip2mm

\begin{remark} 
The exponential convergence rate in (1) aligns with the result obtained in Theorem \ref{cvg}, though $K=\frac{1}{2C_1}$ may not be optimally sharp. 
\end{remark}
\vskip2mm

\begin{remark} A counterexample by Baillon \cite{Baillon78} for the steepest descent equation $\dot \theta +\nabla f(\theta)=0$ suggests that, likely, convexity alone is not sufficient for the trajectories of (\ref{sgdemode-}) to converge strongly. In the case of a  general convex fucntion $f$, one may instead  aim to demonstrate  the weak convergence of trajectories,  following the approach outlined in \cite[Theorem 5.1]{AABR02} for a dissipative dynamical system with Hessian-driven damping. It is important to note that the conditions outlined in (\ref{KL}) are insufficient to imply the convexity of the function. This is exemplified by the case of $f(\theta)=|\theta_2-sin(\theta_1)|^{1/\alpha}$ where $\alpha \in (0, 1)$. Although this function satisfies (\ref{KL}) with $c=1/\alpha$, it does not guarantee convexity.  Furthermore, the set of minimizers for $f$ characterized by the graph of $\theta_2=\sin \theta_1$,  is also non-convex.  
\end{remark}

\section{Discussion} 
This paper explores a novel energy-adaptive gradient algorithm with momentum, termed AGEM. Empirically, we observe that the inclusion of momentum significantly accelerate convergence, particularly in a non-convex setting. Theoretical investigations reveal that AGEM retains  the unconditional energy stability property akin to AEGD. Additionally, we establish convergence to critical points for general non-convex objective functions when step sizes are suitably small. %Note that our convergence results are established without using the global $L$-smoothness assumption.

To capture the dynamical behavior of AGEM, we derive a high resolution ODE system that preserves the momentum effect. This system,  featuring infinitely many steady states,  poses challenges in analysis.  
%an ODE system by taking the limit of AGEM when letting the step size go to $0$. The ODE system is nonlinear and it also includes the momentum variable. 
Nonetheless, leveraging various analytical tools, we successfully establish a series of results: (i) establishing global well-posedness through a construction of a suitable Lyapunov function; (ii) characterizing the time-asymptotic behavior of the solution towards critical points,  using the LaSalle invariance principle; (iii) deriving a linear convergence rate for non-convex objective functions that satisfy the PL condition, and (iv) demonstrating the finite length of $\theta(t)$ and its convergence to the minimum of $f$ based on the \L ojasiewicz inequality.

We anticipate that the analytical framework developed in this article can be extended to  study other variants of AEGD. % though the corresponding ODE system might be more involved.
As an illustration, the associated system for scheme (\ref{gdem}) in the format:
\begin{align*}
\epsilon \dot m &= - m + \nabla f(\theta),\\
v & =\frac{m}{2F(\theta)(1-e^{-t/\epsilon})},\\
\dot r &= -2r|v|^2,\\
\dot \theta &= -2rv
\end{align*}
is anticipated to exhibit analogous solution properties. In the context of  large-scale optimization problems, the practical preference often leans towards a stochastic version of AGEM. Consequently, it becomes intriguing to investigate the convergence behavior of AGEM in the stochastic setting.

\vskip2mm

\par{\bf Acknowledgements.} This research was partially supported by the National Science Foundation under Grant DMS1812666.

\vskip2mm

\bibliographystyle{amsplain}
\bibliography{ref}

\providecommand{\bysame}{\leavevmode\hbox to3em{\hrulefill}\thinspace}
\providecommand{\MR}{\relax\ifhmode\unskip\space\fi MR }
% \MRhref is called by the amsart/book/proc definition of \MR.
\providecommand{\MRhref}[2]{%
  \href{http://www.ams.org/mathscinet-getitem?mr=#1}{#2}
}
\providecommand{\href}[2]{#2}
\begin{thebibliography}{10}

\bibitem{AMA05}
Pierre-Antoine Absil, Robert Mahony, and Benjamin Andrews, \emph{Convergence of the iterates of descent methods for analytic cost functions}, SIAM Journal on Optimization \textbf{16} (2005), no.~2, 531--547.

\bibitem{Z17}
Zeyuan Allen-Zhu, \emph{Katyusha: The first direct acceleration of stochastic gradient methods}, Journal of Machine Learning Research \textbf{18} (2018), no.~221, 1--51.

\bibitem{A98}
Felipe Alvarez, \emph{On the minimizing property of a second order dissipative system in {H}ilbert spaces}, SIAM J. Control and Optimization \textbf{38} (1998), 1102--1119.

\bibitem{AABR02}
Felipe Alvarez, Hedy Attouch, J{\'e}r{\^o}me Bolte, and Patrick Redont, \emph{A second-order gradient-like dissipative dynamical system with {H}essian-driven damping.: Application to optimization and mechanics}, Journal de math{\'e}matiques pures et appliqu{\'e}es \textbf{81} (2002), no.~8, 747--779.

\bibitem{AG22}
Vassilis Apidopoulos, Nicol{\`o} Ginatta, and Silvia Villa, \emph{Convergence rates for the heavy-ball continuous dynamics for non-convex optimization, under {P}olyak--{Ł}ojasiewicz condition}, Journal of Global Optimization \textbf{84} (2022), no.~3, 563--589.

\bibitem{AB09}
Hedy Attouch and J{\'e}r{\^o}me Bolte, \emph{On the convergence of the proximal algorithm for nonsmooth functions involving analytic features}, Mathematical Programming \textbf{116} (2009), no.~1, 5--16.

\bibitem{Att18}
Hedy Attouch, Zaki Chbani, Juan Peypouquet, and Patrick Redont, \emph{Fast convergence of inertial dynamics and algorithms with asymptotic vanishing viscosity}, Mathematical Programming \textbf{168} (2018), 123--175.

\bibitem{AG11}
Hedy Attouch, Xavier Goudou, and Patrick Redont, \emph{The heavy ball with friction method, {I.} the continuous dynamical system: global exploration of the local minima of a real-valued function by asymptotic analysis of a dissipative dynamical system}, Communications in Contemporary Mathematics \textbf{2} (2000), no.~01, 1--34.

\bibitem{Baillon78}
JB~Baillon, \emph{Un exemple concernant le comportement asymptotique de la solution du probl{\`e}me $dudt+\partial\vartheta (\mu)\ni 0$}, Journal of Functional Analysis \textbf{28} (1978), no.~3, 369--376.

\bibitem{BB18}
Raef Bassily, Mikhail Belkin, and Siyuan Ma, \emph{On exponential convergence of {SGD} in non-convex over-parametrized learning}, arXiv preprint arXiv:1811.02564 (2018).

\bibitem{Bet18}
Michael Betancourt, Michael~I Jordan, and Ashia~C Wilson, \emph{On symplectic optimization}, arXiv preprint arXiv:1802.03653 (2018).

\bibitem{BDL09}
J{\'e}r{\^o}me Bolte, Aris Daniilidis, Olivier Ley, and Laurent Mazet, \emph{Characterizations of {Ł}ojasiewicz inequalities: subgradient flows, talweg, convexity}, Transactions of the American Mathematical Society \textbf{362} (2010), no.~6, 3319--3363.

\bibitem{BST14}
J{\'e}r{\^o}me Bolte, Shoham Sabach, and Marc Teboulle, \emph{Proximal alternating linearized minimization for nonconvex and nonsmooth problems}, Mathematical Programming \textbf{146} (2014), no.~1, 459--494.

\bibitem{BDL07}
Jérôme Bolte, Aris Daniilidis, and Adrian Lewis, \emph{The {Ł}ojasiewicz inequality for nonsmooth subanalytic functions with applications to subgradient dynamical systems}, SIAM Journal on Optimization \textbf{17} (2007), no.~4, 1205--1223.

\bibitem{BCN18}
L\'{e}on Bottou, Frank~E. Curtis, and Jorge Nocedal, \emph{Optimization methods for large-scale machine learning}, SIAM Review \textbf{60(2)} (2018), 223--311.

\bibitem{CB21}
Camille Castera, Jérôme Bolte, Cédric Févotte, and Edouard Pauwels, \emph{An inertial {N}ewton algorithm for deep learning}, Journal of Machine Learning Research \textbf{22} (2021), 1--31.

\bibitem{CP18}
Zachary Charles and Dimitris Papailiopoulos, \emph{Stability and generalization of learning algorithms that converge to global optima}, International conference on machine learning, 2018, pp.~745--754.

\bibitem{SG20}
Andre~Belotto da~Silva and Maxime Gazeau, \emph{A general system of differential equations to model first-order adaptive algorithms}, Journal of Machine Learning Research \textbf{21} (2020), no.~129, 1--42.

\bibitem{DZ19}
Simon~S. Du, Xiyu Zhai, Barnabas Poczos, and Aarti Singh, \emph{Gradient descent provably optimizes over-parameterized neural networks}, International Conference on Learning Representations, 2019.

\bibitem{Fran19}
Guilherme Fran{\c{c}}a, Jeremias Sulam, Daniel Robinson, and Ren{\'e} Vidal, \emph{Conformal symplectic and relativistic optimization}, Advances in Neural Information Processing Systems \textbf{33} (2020), 16916--16926.

\bibitem{FGP15}
Pierre Frankel, Guillaume Garrigos, and Juan Peypouquet, \emph{Splitting methods with variable metric for {K}urdyka--{Ł}ojasiewicz functions and general convergence rates}, Journal of Optimization Theory and Applications \textbf{165} (2015), no.~3, 874--900.

\bibitem{GL22}
Tianxiang Gao, Hailiang Liu, Jia Liu, Hridesh Rajan, and Hongyang Gao, \emph{A global convergence theory for deep {R}e{LU} implicit networks via over-parameterization}, International Conference on Learning Representations, 2021.

\bibitem{GRV17}
Guillaume Garrigos, Lorenzo Rosasco, and Silvia Villa, \emph{Convergence of the forward-backward algorithm: beyond the worst-case with the help of geometry}, Mathematical Programming (2022), 1--60.

\bibitem{PM82}
Jacob~Palis Junior and Welington De~Melo, \emph{Geometric theory of dynamical systems: an introduction}, Springer-Verlag, 1982.

\bibitem{KN16}
Hamed Karimi, Julie Nutini, and Mark Schmidt, \emph{Linear convergence of gradient and proximal-gradient methods under the {P}olyak-{Ł}ojasiewicz condition}, Machine Learning and Knowledge Discovery in Database, Springer, 2016, pp.~795--811.

\bibitem{KB15}
Diederik Kingma and Jimmy Ba, \emph{{Adam}: A method for stochastic optimization}, International Conference on Learning Representations (ICLR), 2014.

\bibitem{Ku98}
Krzysztof Kurdyka, \emph{On gradients of functions definable in o-minimal structures}, Annales de l'institut Fourier, vol.~48, 1998, pp.~769--783.

\bibitem{L60}
J.~LaSalle, \emph{Some extensions of {L}yapunov's second method}, IRE Transactions on Circuit Theory \textbf{7} (1960), no.~4, 520--527.

\bibitem{LZ22}
Chaoyue Liu, Libin Zhu, and Mikhail Belkin, \emph{Loss landscapes and optimization in over-parameterized non-linear systems and neural networks}, Applied and Computational Harmonic Analysis (2022).

\bibitem{LT22}
Hailiang Liu and Xuping Tian, \emph{An adaptive gradient method with energy and momentum}, Annals of Applied Mathematics \textbf{38} (2022), no.~2, 183--222.

\bibitem{LT20}
Hailiang Liu and Xuping Tian, \emph{Aegd: adaptive gradient descent with energy}, Numerical Algebra, Control and Optimization (2023).

\bibitem{LT21}
\bysame, \emph{{SGEM}: stochastic gradient with energy and momentum}, Numerical Algorithms (2023), 1--28.

\bibitem{Lo84}
S.~{\L}ojasiewicz, \emph{Sur les trajectoires de gradient d’une fonction analytique}, Seminari di Geometria 1982-1983 (lecture notes), Dipartemento di Matematica, Universita di Bologna (1984), 115--117.

\bibitem{MW20}
Chao Ma, Lei Wu, and E~Weinan, \emph{A qualitative study of the dynamic behavior for adaptive gradient algorithms}, Mathematical and Scientific Machine Learning, 2022, pp.~671--692.

\bibitem{Mad18}
Chris~J Maddison, Daniel Paulin, Yee~Whye Teh, Brendan O'Donoghue, and Arnaud Doucet, \emph{Hamiltonian descent methods}, arXiv preprint arXiv:1809.05042 (2018).

\bibitem{Mue19}
Michael Muehlebach and Michael Jordan, \emph{A dynamical systems perspective on nesterov acceleration}, International Conference on Machine Learning, 2019, pp.~4656--4662.

\bibitem{N83}
Yurii Nesterov, \emph{A method of solving a convex programming problem with convergence rate {O}$(1/k^2)$}, Doklady Akademii Nauk, vol. 269, Russian Academy of Sciences, 1983, pp.~543--547.

\bibitem{N04}
\bysame, \emph{Introductory lectures on convex optimization: A basic course}, vol.~87, Springer Science \& Business Media, 2013.

\bibitem{P63}
Boris Polyak, \emph{Gradient methods for the minimisation of functionals}, {USSR} Computational Mathematics and Mathematical Physics \textbf{3} (1963), 864--878.

\bibitem{P64}
Boris Polyak, \emph{Some methods of speeding up the convergence of iteration methods}, Ussr computational mathematics and mathematical physics \textbf{4} (1964), no.~5, 1--17.

\bibitem{SD21}
Bin Shi, Simon~S Du, Michael~I Jordan, and Weijie~J Su, \emph{Understanding the acceleration phenomenon via high-resolution differential equations}, Mathematical Programming (2021), 1--70.

\bibitem{SJ19}
Mahdi Soltanolkotabi, Adel Javanmard, and Jason~D Lee, \emph{Theoretical insights into the optimization landscape of over-parameterized shallow neural networks}, IEEE Transactions on Information Theory \textbf{65} (2018), no.~2, 742--769.

\bibitem{SB15}
Weijie Su, Stephen Boyd, and Emmanuel Candes, \emph{A differential equation for modeling {N}esterov's accelerated gradient method: Theory and insights}, Advances in Neural Information Processing Systems \textbf{3} (2015).

\bibitem{TH12}
Tijmen Tieleman, \emph{Lecture 6.5-rmsprop: Divide the gradient by a running average of its recent magnitude}, COURSERA: Neural networks for machine learning \textbf{4} (2012), no.~2, 26.

\bibitem{Wib16}
Andre Wibisono, Ashia~C Wilson, and Michael~I Jordan, \emph{A variational perspective on accelerated methods in optimization}, proceedings of the National Academy of Sciences \textbf{113} (2016), no.~47, E7351--E7358.

\bibitem{IEQ1}
Xiaofeng Yang, \emph{Linear, first and second-order, unconditionally energy stable numerical schemes for the phase field model of homopolymer blends}, Journal of Computational Physics \textbf{327} (2016), 294--316.

\bibitem{IEQ2}
Jia Zhao, Qi~Wang, and Xiaofeng Yang, \emph{Numerical approximations for a phase field dendritic crystal growth model based on the invariant energy quadratization approach}, International Journal for Numerical Methods in Engineering \textbf{110} (2017), no.~3, 279--300.

\bibitem{Lo63}
S.~{Ł}ojasiewicz, \emph{A topological property of real analytic subsets (in {F}rench)}, Coll. du CNRS, Les \'{e}quations aux deriv\'{e}\'{e}s partielles (1963), 87–89.

\end{thebibliography}

% \par{\bf References.}\, Always use $\backslash$cite$\{biblabelname\}$ (eg. \cite{taubes1}) to cite
%           references which have been named in the bibliography via
%           $\backslash$bibitem$\{biblabelname\}$.

%           % Non-BibTeX users please use

%           \begin{thebibliography}{}

%           % and use \bibitem to create references.

%            \bibitem{mnemonic}Author, {\em Article's title}, Journal Name,
%          Volume Number(Issue Number):page numbers, year.

%           % Format for Journal Reference. For example

%           \bibitem{taubes1} C. Taubes, {\em The Seiberg-Witten invariants
%           and
%           symplectic forms}, Math. Res. Letters, 1:809--822, 1994.
%           \end{thebibliography}

\end{document}